\documentclass[letterpaper, 10pt]{IEEEtran}

\usepackage{mathtools,dsfont,amsfonts,amssymb}

\usepackage{enumitem}
\usepackage{subcaption}

\usepackage[amsmath,thmmarks]{ntheorem}

\theoremstyle{plain}
\theorembodyfont{\itshape}
\theoremheaderfont{\normalfont\bfseries}
\newtheorem{theorem}{Theorem}
\newtheorem{lemma}{Lemma}
\newtheorem{corollary}{Corollary}

\theorembodyfont{\upshape}
\newtheorem{definition}{Definition}
\newtheorem{remark}{Remark}

\theoremstyle{nonumberplain}
\theoremheaderfont{\scshape}
\theorembodyfont{\normalfont}
\theoremsymbol{\ensuremath{\blacksquare}}
\newtheorem{proof}{Proof}

\theoremstyle{plain}

\theorembodyfont{\upshape}
\newtheorem{assumption}{Assumption}

\usepackage[svgnames]{xcolor}
\usepackage{hyperref}
\hypersetup{
    colorlinks=true,
    linkcolor=blue,
    filecolor=magenta,      
    urlcolor=cyan,
    citecolor=Green,
    }
\usepackage{booktabs}
\usepackage{subcaption}
\usepackage{graphicx}
\usepackage[sort,compress]{cite}
\usepackage{soul}
\setstcolor{red}
\newcommand*\ALPHABET{\mathcal}
\newcommand*\ABSTRACT{\tilde}
\newcommand*\ESTIMATE{\hat}

\newcommand*\B[1]{\ALPHABET B(\ALPHABET #1)}

\newcommand*\PR{\mathds{P}}
\newcommand*\EXP{\mathds{E}}
\newcommand*\IND{\mathds{1}}
\newcommand*\reals{\mathds{R}}

\newcommand{\TRANS}{\intercal}

\newcommand\dWas{d_{\mathrm{Was}}}
\newcommand\dS{d_{\ALPHABET S}}
\newcommand\dhS{d_{\ABSTRACT {\ALPHABET S}}}
\def\dhSphi(#1, #2){d_{\ABSTRACT {\ALPHABET S}}(\phi(#1), \phi(#2))}
\newcommand\EST{\mathcal{E}}

\DeclareMathOperator{\Lip}{Lip}

\newcommand*\AIS{\mathsf{AIS}}
\newcommand*\CE{\mathsf{CE}}

\begin{document}
\title{Sub-optimality bounds for certainty equivalent policies in partially observed systems}
\author{Berk Bozkurt, Aditya Mahajan, Ashutosh Nayyar, and Yi Ouyang%
\thanks{A preliminary version of this work appeared in~\cite{bozkurtgeneralized}.}
\thanks{Berk Bozkurt is with INLAN, Montreal, QC, Canada. (email: berk.bozkurt@mail.mcgill.ca).} 
\thanks{Aditya Mahajan is with the Department of Electrical and Computer Engineering, McGill University, Montreal, QC, Canada. (email: aditya.mahajan@mcgill.ca).}%
\thanks{Ashutosh Nayyar is with the Department of Electrical and Computer Engineering, University of Southern California, Los Angeles, CA, USA. (email: ashutosn@usc.edu).}%
\thanks{Yi Ouyang is with Atmanity, Santa Clara, CA, USA. (email: ouyangyii@gmail.com).}}
\maketitle 

\begin{abstract}
    In this paper, we present a generalization of the certainty equivalence principle of stochastic control. One interpretation of the classical certainty equivalence principle for linear systems with output feedback and quadratic costs is as follows: the optimal action at each time is obtained by evaluating the optimal state-feedback policy of the stochastic linear system at the minimum mean square error (MMSE) estimate of the state. Motivated by this interpretation, we consider certainty equivalent policies for general (non-linear) partially observed stochastic systems that allow for any state estimate rather than restricting to MMSE estimates. In such settings, the certainty equivalent policy is not optimal. For models where the cost and the dynamics are smooth in an appropriate sense, we derive upper bounds on the sub-optimality of certainty equivalent policies. We present several examples to illustrate the results.
\end{abstract}

\section{Introduction}
In many applications in robotics, autonomous systems, finance, healthcare, and other domains the decision maker does not have access to the complete state of the system. Such systems are often modeled as partially observable Markov decision processes (POMDPs). The standard approach for solving POMDPs is to translate them into fully observed Markov decision processes (MDPs) by considering the posterior belief of the decision maker on the current state as a sufficient statistic~\cite{Astrom1965,Smallwood1973}. There is a rich literature on algorithms which use the structure of the resulting belief-state MDP to obtain optimal and approximately optimal policies.

However, finding the optimal policy is PSPACE-hard~\cite{papadimitriou1987complexity}. Most algorithms to find optimal policies have exponential worst-case complexity in the size of the state and observation spaces making them impractical for large-scale problems. Meanwhile, many heuristic approaches such as point-based value iteration methods~\cite{shani2013survey} can be computationally efficient but lack provable performance guarantees. In fact, finding approximately optimal policies is also PSPACE-hard~\cite{burago1996complexity,lusena2001nonapproximability}, indicating that general purpose algorithms may not be efficient for all POMDP models.

These challenges have motivated significant interest in identifying structured classes of policies that are both computationally tractable and have good performance guarantees. Examples include policies based on a finite window of previous observations~\cite{white1994finite} (called frame stacking in reinforcement learning) and, more generally, policies based on \emph{agent state}, which is a recursively updatable function of the past observations and actions~\cite{vanroy2021simple,sinha2024agent}. 
Several papers have identified sufficient conditions for such policies to perform well, including approximate information state~\cite{AIS}, filter stability~\cite{mcdonald2022robustness,kara2022near,golowich}, weakly revealing observations~\cite{liu2022partially}, low covering numbers~\cite{lee2007makes}, low-rank structure~\cite{guo2023provably}, and revealing observation models~\cite{belly2025revelations}.  These results highlight that structured policies can be approximately optimal for a specific sub-class of POMDPs. 

In linear systems with quadratic cost and Gaussian noise (the so-called LQG problem), the optimal policy may be viewed as a structured policy with the following structure: the optimal action at each time is a linear function of the MMSE (minimum mean square error) state estimate and the corresponding feedback gain is the same as the feedback gain of the optimal \emph{state-feedback} control of the \emph{deterministic} system obtained by replacing all random variables by their means. This result is typically called the \emph{certainty equivalence principle of stochastic control}~\cite{Theil1954,Theil1957,simon1956dynamic} and has been generalized to other settings including systems with non-linear dynamics~\cite{bar1974dual,derpich2022dual,li2025semilinear} and risk-sensitive control~\cite{whittle1986risk}.

In this paper, we present a generalization of the certainty equivalence principle to general POMDPs. Our generalization is based on a slightly different interpretation of the LQG certainty equivalence principle: consider the optimal policy of the \emph{stochastic} system with perfectly observable states and evaluate that policy at the MMSE state estimate. Similar views on the certainty equivalence principle have been used in the reinforcement learning and adaptive control literature~\cite{hardtrecht2022patterns} and are sometimes called partially stochastic certainty equivalence~\cite{bertsekas2012dynamic}. For clarity, we present a formal description of this interpretation.

Let $\ALPHABET P$ denote the partially observable linear system with state $s_t \in \ALPHABET S$, action $a_t \in \ALPHABET A$, and output $y_t \in \ALPHABET Y$, where $\ALPHABET S$, $\ALPHABET A$, and $\ALPHABET Y$ are Euclidean spaces. Let $\ALPHABET M$ be the fully observable stochastic linear system where the decision maker has access to the state. Note that the fully observed system $\ALPHABET M$ is different from one typically assumed in certainty equivalence. As is the case in the standard certainty equivalence principle, we are assuming that $\ALPHABET M$ is fully observed but we are not assuming that the dynamics of $\ALPHABET M$ are deterministic. For simplicity, suppose that the system runs for a finite horizon $T$. Let $\pi^{\ALPHABET M} = (\pi^{\ALPHABET M}_1, \dots, \pi^{\ALPHABET M}_T)$ denote the optimal policy for model $\ALPHABET M$ and $\mu^{\ALPHABET P} = (\mu^{\ALPHABET P}_1, \dots, \mu^{\ALPHABET P}_T)$ denote the optimal policy for model~$\ALPHABET P$. Moreover, for any history $h_t = (y_1, a_1, y_2, a_2, \dots, y_{t})$ of observations and actions until time $t$, let $\EST_t(h_t)$ denote the MMSE estimate of the state given the history $h_t$. Then, the standard result for LQG optimal control is that
\[
    \mu^{\ALPHABET P}_t(h_t) = \pi^{\ALPHABET M}_t(\EST_t(h_t)).
\]

In this paper, we consider two generalizations of the above result.
\begin{enumerate}
    \item We allow $\EST_t$ to be \emph{any} estimator of the state rather than restricting attention to MMSE estimator.
    \item We consider general POMDPs rather than restricting attention to linear systems.
\end{enumerate}
In this general setting, we define the certainty equivalent policy $\mu^{\CE} = (\mu^{\CE}_1, \dots, \mu^{\CE}_T)$ as
\begin{equation}\label{eq:CE}
    \mu^{\CE}_t(h_t) = \pi^{\ALPHABET M}_t(\EST_t(h_t)).
\end{equation}
In general, $\mu^{\CE}$ is not optimal. Our main result is to characterize the degree of sub-optimality of the certainty equivalent policy $\mu^{\CE}$.

Our results may be viewed as an instance of characterizing the sub-optimality gap of structured policies for POMDPs. There is a rich literature on deriving such sub-optimality gaps using tools from predictive state representation~\cite{Wolfe2008,Hamilton2014}, bisimulation metrics~\cite{CastroPanangadenPrecup_2009}, approximation information states (AIS)~\cite{AIS}, and filter stability~\cite{mcdonald2022robustness,kara2022near,golowich}. Our analysis is based on AIS-based approximation bounds of~\cite{AIS}. 

The main contributions of this paper are as follows:
\begin{itemize}
\item  We derive explicit bounds on the sub-optimality of certainty equivalent policies under the assumption that the cost and dynamics are smooth in an appropriate sense. Our bounds depend on the worst-case value of the conditional expected estimation error. 

\item We extend our results to settings with state abstraction, where the estimator produces an estimate of an abstract state rather than the full state, allowing our framework to apply to large-scale systems where state aggregation/quantization or feature abstraction is necessary.

\item  We illustrate our results through several examples, including: systems with bounded observation noise, intermittently degraded observations, control with event-triggered communication, learning and adaptive control settings, and control of non-homogeneous multi-particle systems. These examples demonstrate that certainty equivalent policies can achieve near-optimal performance when the estimation error is small, providing practical and computationally tractable alternatives to exact POMDP solutions.
\end{itemize}

A preliminary version of this result appeared in~\cite{bozkurtgeneralized}. The analysis there was restricted to certainty equivalent policies which estimate the complete state of the system and the results were obtained under stronger assumptions. The state abstraction model considered in this paper is new and the results are derived under weaker assumptions.

The rest of the paper is organized as follows. We  present the system model and define certainty equivalent policies in Sec.~\ref{sec:system_model}. We illustrate our results through several examples in Sec.~\ref{sec:examples}. We present the proofs in Sec.~\ref{sec:analysis} and conclude in Sec.~\ref{sec:conclusion}.

\paragraph*{Notation} We use uppercase letters to denote random variables (e.g., $S$, $A$, etc.), the corresponding lowercase letters to denote their realizations (e.g., $s$, $a$, etc.), and the corresponding calligraphic letters to denote their space of realizations (e.g., $\ALPHABET S$, $\ALPHABET A$, etc.). Subscripts denote time, so $S_t$ denotes a variable at time $t$. The notation $S_{1:t}$ is a shorthand for the sequence $(S_1, \dots, S_t)$. 

We use $\reals$ to denote the set of real numbers. For a topological space $\ALPHABET X$, $\Delta(\ALPHABET X)$ denotes the set of all probability measures on $\ALPHABET X$ and $\B{X}$ denotes the set of all bounded and measurable real-valued functions on $\ALPHABET X$. 

We use $\PR(\cdot)$ to denote the probability of an event and $\EXP[\cdot]$ to denote the expectation of a random variable. We use the notation $\PR(S_{t+1} \in M_S | s_t, a_t)$ as a shorthand for $\PR(S_{t+1} \in M_S | S_{t} = s_t, A_t = a_t)$. 

Given a metric space $(\ALPHABET S, \dS)$, the Wasserstein-1 distance between two probability distributions $\nu_1, \nu_2 \in \Delta(\ALPHABET S)$ is given by
\[
   \dWas(\nu_1,\nu_2) = \inf_{(S_1, S_2) \sim \Gamma(\nu_1, \nu_2)}\EXP[\dS(S_1, S_2)],
\]
where $\Gamma(\nu_1, \nu_2)$ denotes all joint probability distributions on $\ALPHABET S \times \ALPHABET S$ with marginals $\nu_1$ and $\nu_2$. For two  random variables $S_1$ and $S_2$ taking values in $\ALPHABET S$, we sometimes use $\dWas(S_1, S_2)$ to denote the Wasserstein-1 distance between the marginal distributions of $S_1$ and $S_2$. For a function $f$ from one metric space to another, $\Lip(f)$ denotes the Lipschitz constant of $f$.

\section{System model and the main results}\label{sec:system_model}

Consider a discrete-time partially observable Markov decision process (POMDP), denoted by $\ALPHABET P$, with state space $\ALPHABET S$, observation space $\ALPHABET Y$, and action space $\ALPHABET A$ that runs for a finite horizon~$T$. Let $S_t \in \ALPHABET S$ denote the state of the system, $Y_t \in \ALPHABET Y$ denote the observation of the controller, and $A_t \in \ALPHABET A$ denote the control action taken by the controller at time~$t$. We assume that $\ALPHABET S, \ALPHABET Y$ and $\ALPHABET A$ are metric spaces and use $\dS$ to denote the metric on $\ALPHABET S$.

The initial state and observation $(S_1, Y_1)$ are distributed according to a probability distribution $\xi \in \Delta(\ALPHABET S \times \ALPHABET Y)$. The dynamics and observation are assumed to be Markovian.
In particular, we assume that there exist stochastic kernels $P_t \colon \ALPHABET S \times \ALPHABET A \to \Delta(\ALPHABET S \times \ALPHABET Y)$, $t \in \{1, \dots, T-1\}$, such that for any $t \in \{1, \dots, T-1\}$, any Borel subsets $M_S, M_Y$ of $\ALPHABET S$ and $\ALPHABET Y$ respectively, and any realizations $s_{1:t}$, $y_{1:t}$ and $a_{1:t}$ of $S_{1:t}$, $Y_{1:t}$, $A_{1:t}$, respectively, we have
\begin{align}
    \hskip 1em & \hskip -1em
    \PR(S_{t+1} \in M_S, Y_{t+1} \in M_Y | s_{1:t}, y_{1:t}, a_{1:t})
    \notag \\
    &=
    \PR(S_{t+1} \in M_S, Y_{t+1} \in M_Y | s_{t}, a_{t})
    \notag\\
    &\eqqcolon P_t(M_S, M_Y | s_t,a_t).
\end{align}
We will use the notation $P_{S,t}(\cdot | s_t, a_t)$ and $P_{Y,t}(\cdot | s_t,a_t)$ to denote the state and observation marginals of $P_t(\cdot, \cdot | s_t,a_t)$. 

At each time~$t$, the system incurs a per-step cost $c_t(S_t,A_t)$, which is uniformly bounded i.e., there exists a $c_{\max} \in \reals$ such that $\sup_{s \in \ALPHABET S, a \in \ALPHABET A} |c_t(s,a)| \le c_{\max}$.

The controller has access to observation and action history $h_t = \{y_{1:t}, a_{1:t-1}\}$ at time~$t$. Let $\ALPHABET H_t$ denote the space of realizations of $h_t$. Let $\mu = (\mu_1, \dots, \mu_T)$ denote any history dependent deterministic policy. The value function of policy $\mu$ is defined as
\[
    W^{\ALPHABET P, \mu}_t(h_t) = \EXP^{\mu}\biggl[ \sum_{\tau = t}^T c_{\tau}(s_{\tau}, a_{\tau}) \biggm| h_t \biggr]
\]
where $\EXP^{\mu}$ denotes expectation with respect to a joint probability measure on the system variables induced by the policy~$\mu$.
The \emph{optimal} value function is defined as
\[
    W^{\ALPHABET  P}_t(h_t) = \inf_{\mu} W^{\ALPHABET P, \mu}_t(h_t),
\]
where the infimum is over all history dependent policies. 

The standard approach to find an optimal policy in POMDPs is to use belief-state based dynamic programs~\cite{Astrom1965,Smallwood1973}, which are computationally challenging. As discussed in the introduction, certainty equivalent policies provide an attractive alternative approach. In the rest of this section, we characterize the sub-optimality of such policies. 

\subsection{Certainty equivalent policies}\label{sec:gce}
Consider a state feedback controller for the stochastic system defined above, where the controller has access to the state $S_t$ at time~$t$. This system is a finite horizon Markov decision process (MDP) $\ALPHABET M$ with state space $\ALPHABET S$, action space $\ALPHABET A$, dynamics $P_{S,t}$, and per-step cost $c_t$. 

We need a technical assumption to ensure that the MDP $\ALPHABET M$ has an optimal policy. 
\begin{definition}[Measurable selection]
    An MDP $\langle \ALPHABET S$, $\ALPHABET A$, $\{P_{S,t}\}_{t=1}^{T-1}$, $\{c_t\}_{t=1}^T, T\rangle$
    is said to satisfy \emph{measurable selection} if for every measurable function $V\colon \ALPHABET S \to \reals$ and each time $t \in \{1, \dots, T-1\}$, there exists a measurable selector $\pi\colon \ALPHABET S \to \ALPHABET A$ such that 
        \begin{align*}
        & \inf_{a \in \ALPHABET A} \biggl\{
        c_t(s, a) + \int_{\ALPHABET S} V(s') P_{S,t}(ds' | s,a) 
        \biggr\}
        \\
        & = 
        c_t(s, \pi(s)) + \int_{\ALPHABET S} V(s') P_{S,t}(ds' | s,\pi(s))
        \eqqcolon V_{+}(s),
                \end{align*}
        and $V_{+}\colon \ALPHABET S \to \reals$ defined above is a measurable function.
\end{definition}

\begin{assumption}\label{ass:meas-selection}
    The model ${\ALPHABET M}$ satisfies measurable selection.
\end{assumption}

An implication of ${\ALPHABET M}$ satisfying measurable selection is that there exists an optimal policy $\pi^{{\ALPHABET M}} = (\pi^{{\ALPHABET M}}_1, \dots, \pi^{{\ALPHABET M}}_T)$, where $\pi^{ {\ALPHABET M}}_t \colon  {\ALPHABET S}  \to \ALPHABET A$, with associated optimal value functions $(V^{ {\ALPHABET M}}_1, \dots, V^{ {\ALPHABET M}}_T)$, $V^{ {\ALPHABET M}}_t \colon  {\ALPHABET S} \to \reals$, for this MDP~\cite{hernandez2012discrete}. 

We now use the optimal policy $\pi^{ {\ALPHABET M}}$ for the MDP~${\ALPHABET M}$ to define a feasible policy for the POMDP~$\ALPHABET P$. 
Suppose we are given a sequence of \emph{ state estimation functions} $\{\EST_t\}_{t=1}^T$, where $\EST_t \colon \ALPHABET H_t \to  {\ALPHABET S}$. 
For instance, $\EST_t$ may be the conditional mean or the MAP (maximum a posteriori probability) estimator which depend on the conditional distribution of the state given the history of observations and actions. Alternatively, the estimator could be a simple function (e.g. linear) of the last few  observations and actions. 

We say that a history-dependent policy $\mu^\EST = (\mu^\EST_1, \dots, \mu^\EST_T)$ is \emph{certainty equivalent with respect to MDP $ {\ALPHABET M} = \langle \ALPHABET S, \ALPHABET A, \{P_{S,t}\}_{t=1}^{T-1}, \{c_t\}_{t=1}^T,T\rangle$ and estimators  $\{\EST_t\colon \ALPHABET H_t \to  {\ALPHABET S}\}_{t \ge 1}$} if 
\begin{align}\label{eq:cert_equiv}
    \mu^\EST_t(h_t) = \pi^{ {\ALPHABET M}}_t(\EST_t(h_t)).
\end{align}
In other words, a certainty equivalent policy treats   $\EST_t(h_t)$ as an error-free estimate of the  state of the  MDP $ {\ALPHABET M}$ and then acts according to the optimal policy of $ {\ALPHABET M}$.
As mentioned earlier, such policies are optimal in the LQG setting when the conditional mean is used as the estimate but they are, in general, not optimal. We are interested in providing a bound on the sub-optimality of the certainty equivalent policies. Specifically, we are interested in an upper bound on the gap between the value functions of policy $\mu^\EST$ and the optimal value functions of the POMDP, i.e. a bound on $W^{{\ALPHABET{P}},\mu^\EST}_t(h_t) - W^{\ALPHABET{P}}_t(h_t)$. Our results provide such a bound under the following technical assumption on the ``smoothness'' of per-step cost and system dynamics. 

 \begin{assumption}\label{ass:lipschitz} 
    There exist a sequence of concave and non-decreasing functions $F^P_t, F^c_t \colon \reals_{\ge 0} \to \reals_{\ge 0}$, $t \in \{1, \dots, T\}$, such that for any $s, s' \in \ALPHABET S$ and $a \in \ALPHABET A$, we have
    \begin{equation}\label{eq:Lip-P-1}
        \dWas(P_{S,t}(\cdot | s, a), P_{S,t}(\cdot | s', a)) \le F^P_t(d_{\ALPHABET S}(s, s'))
    \end{equation}
    and
    \begin{equation}\label{eq:Lip-c-1}
        \bigl| c_t(s,a) - c_t(s',a) \bigr| \le F^c_t(d_{\ALPHABET S}(s, s')).
    \end{equation}
\end{assumption}
\begin{remark}\label{rem:lip_linear}
When $F^P_t$, $F^c_t$ are linear, i.e., $F^P_t(x) = L_t^{P}x$ and $F^c_t(x) = L_t^{c}x$ for some positive constants $L^P_t$ and $L^c_t$, then 
Assumption \ref{ass:lipschitz} reduces to assuming that the dynamics and per-step cost are Lipschitz continuous, which  is a  standard assumption for smoothness of the dynamics and per-step cost, and implies smoothness (Lipschitz continuity) of the value function of MDP~$ {\ALPHABET M}$~\cite{Hinderer2005}. In particular, following the argument of~\cite{Hinderer2005}, we have
    \[
        \Lip(V^{{\ALPHABET M}}_t) \le L^c_t + L^P_{t+1} \Lip(V^{{\ALPHABET M}}_{t+1}),
    \]
    which can be unrolled to obtain an upper bound on the Lipschitz constant of $V^{\ALPHABET M}_t$ in terms of $\{L^P_{\tau}\}_{\tau=t+1}^{T-1}$ and $\{L^c_{\tau}\}_{\tau=t}^{T}$.
\end{remark}

 Our bounds for the sub-optimality gap of certainty equivalent policy $\mu^\EST_t(h_t)$ defined in \eqref{eq:cert_equiv}
 depend on the {quality} of the estimates produced by the state estimation functions $\EST_t$, which we assess using the metric $d_{{\ALPHABET S}}$ on the state space. For each time $t$, we define
\begin{align}\label{eq:closeness_assm}
      \eta_t &:= \sup_{h_t \in \ALPHABET H_t} \EXP[ d_{\ALPHABET S}(S_t, \EST_t(h_t)) |  h_t ].
    \end{align}
    
\begin{remark}\label{rem:policy_indep}
Note that in Eq.~\eqref{eq:closeness_assm} the right hand side does not depend on the policy because the conditional probability distribution of current state $S_t$ given history $h_t$ is policy independent~\cite{Astrom1965,Smallwood1973}. 
\end{remark}

\begin{assumption}\label{ass:finite_eta}
For $t=1,2,\ldots,T$ we have $\eta_t < \infty$ where $\eta_t$ is given by~\eqref{eq:closeness_assm}.
\end{assumption}

We can now state our first result.
\begin{theorem}\label{thm:main_theorem}
Define
\[
    \varepsilon_t = F^c_t (\eta_t) \text{ and } \delta_t = F^P_t (\eta_t) + \eta_{t+1}
\]
   where $\eta_t$ is given by \eqref{eq:closeness_assm}.
    Then, under Assumptions~\ref{ass:meas-selection}, \ref{ass:lipschitz} and \ref{ass:finite_eta}, we have that the certainty equivalent policy $\mu^\EST$ (defined in~\eqref{eq:cert_equiv}) satisfies 
    \begin{equation}\label{eq:main_theorem_rep_ex1}
        W^{\ALPHABET P, \mu^\EST}_t(h_t) - W^{\ALPHABET P}_t(h_t) \le 2 \alpha_t
    \end{equation}
    where
    \begin{equation}\label{eq:alpha}
        \alpha_t = \varepsilon_t + \sum_{\tau = t}^{T-1}\bigl[ \delta_{\tau}\Lip(V^{{\ALPHABET M}}_{\tau+1}) + \varepsilon_{\tau+1} \bigr]
    \end{equation}
    and $\{V^{{\ALPHABET M}}_{t}\}_{t=1}^T$ are the optimal value functions for MDP~${\ALPHABET M}$.
\end{theorem}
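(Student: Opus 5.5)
The plan is to compare the POMDP value functions with the MDP value functions evaluated at the estimate, in the spirit of the approximate information state (AIS) bounds of~\cite{AIS}. Write $\hat s_t = \EST_t(h_t)$. Define the candidate approximate value function $\hat V_t(h_t) := V^{\ALPHABET M}_t(\EST_t(h_t))$ on histories. The proof has two halves. First, we show $|W^{\ALPHABET P}_t(h_t) - \hat V_t(h_t)| \le \alpha_t$. Second, we show $|W^{\ALPHABET P,\mu^\EST}_t(h_t) - \hat V_t(h_t)| \le \alpha_t$. The triangle inequality then gives the bound $2\alpha_t$. Both halves are backward inductions in $t$ that differ only in whether the action is the minimizing one or $\pi^{\ALPHABET M}_t(\hat s_t)$. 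The common engine is a one-step ``AIS-type'' comparison for a fixed action $a$.

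\textbf{Cost term.} We need $\bigl|\EXP[c_t(S_t,a)\mid h_t] - c_t(\hat s_t,a)\bigr| \le \varepsilon_t$. This follows from Assumption~\ref{ass:lipschitz} and Jensen's inequality, using concavity of $F^c_t$:
\[
\EXP[F^c_t(\dS(S_t,\hat s_t))\mid h_t] \le F^c_t(\eta_t).
\]
Monotonicity of $F^c_t$ and the definition of $\eta_t$ in~\eqref{eq:closeness_assm} are used in this step. At $t=T$ this already gives the base case with $\alpha_T=\varepsilon_T$, for both the optimal policy and $\mu^\EST$.

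\textbf{Transition term.} Let $V=V^{\ALPHABET M}_{t+1}$, assumed Lipschitz. Since the problem is only at $\hat s_t$, we compare two quantities:
\[
\EXP[V(\EST_{t+1}(H_{t+1}))\mid h_t,a] \quad\text{and}\quad \int V(s')\,P_{S,t}(ds'\mid \hat s_t,a).
\]
We bound this through the intermediate quantity $\EXP[V(S_{t+1})\mid h_t,a]$.
\begin{itemize}
\item The gap between $\EXP[V(\EST_{t+1}(H_{t+1}))\mid h_t,a]$ and $\EXP[V(S_{t+1})\mid h_t,a]$ is at most $\Lip(V)\,\EXP[\dS(S_{t+1},\EST_{t+1}(H_{t+1}))\mid h_t,a]$. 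By the tower property and Remark~\ref{rem:policy_indep}, this is at most $\Lip(V)\eta_{t+1}$.
\item The gap between $\EXP[V(S_{t+1})\mid h_t,a] = \EXP\bigl[\int V\,dP_{S,t}(\cdot\mid S_t,a)\bigm| h_t\bigr]$ and $\int V\,dP_{S,t}(\cdot\mid\hat s_t,a)$ is at most $\Lip(V)\,\EXP[F^P_t(\dS(S_t,\hat s_t))\mid h_t]$. This uses Kantorovich--Rubinstein duality. Jensen's inequality then bounds it by $\Lip(V)F^P_t(\eta_t)$.
\end{itemize}
Together the two gaps give $\delta_t\Lip(V^{\ALPHABET M}_{t+1})$. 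A Lipschitz constant for $V^{\ALPHABET M}_{t+1}$ is needed here, and it appears explicitly in~\eqref{eq:alpha}. If it is infinite the bound is vacuous, so we may assume it is finite.

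\textbf{Induction.} Let $Q^{\ALPHABET M}_t(s,a)$ denote the MDP Q-function. For the optimal value,
\[
W^{\ALPHABET P}_t(h_t) = \min_a \EXP\bigl[c_t(S_t,a) + W^{\ALPHABET P}_{t+1}(H_{t+1})\bigm| h_t,a\bigr].
\]
We replace $W^{\ALPHABET P}_{t+1}$ by $\hat V_{t+1}$ at cost $\alpha_{t+1}$, using the induction hypothesis and the fact that conditional expectation is a contraction. We then use the two one-step bounds to get, for each $a$, that the bracket is within $\alpha_{t+1}+\varepsilon_t+\delta_t\Lip(V^{\ALPHABET M}_{t+1})$ of $Q^{\ALPHABET M}_t(\hat s_t,a)$. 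Since $\bigl|\min_a f - \min_a g\bigr| \le \sup_a|f-g|$, and the minimum over $a$ of $Q^{\ALPHABET M}_t(\hat s_t,a)$ is $V^{\ALPHABET M}_t(\hat s_t)$, the recursion is $\alpha_t = \varepsilon_t + \delta_t\Lip(V^{\ALPHABET M}_{t+1}) + \alpha_{t+1}$.

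We must check that this matches~\eqref{eq:alpha}. Unrolling with $\alpha_T=\varepsilon_T$ gives $\sum_{\tau=t}^{T}\varepsilon_\tau + \sum_{\tau=t}^{T-1}\delta_\tau\Lip(V^{\ALPHABET M}_{\tau+1})$. This is exactly $\varepsilon_t+\sum_{\tau=t}^{T-1}[\delta_\tau\Lip(V^{\ALPHABET M}_{\tau+1})+\varepsilon_{\tau+1}]$.

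For $\mu^\EST$ the argument is the same with $a=\pi^{\ALPHABET M}_t(\hat s_t)$ fixed, so that $Q^{\ALPHABET M}_t(\hat s_t,a)=V^{\ALPHABET M}_t(\hat s_t)$ exactly, and no minimization is needed. The policy evaluation equation holds because $\mu^\EST$ is history dependent and deterministic.

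\textbf{Main obstacle.} The step I expect to be most delicate is the transition term. Conditioning must be handled carefully, since $H_{t+1}$ includes $Y_{t+1}$. Justifying that
\[
\EXP\bigl[\EXP[\dS(S_{t+1},\EST_{t+1}(H_{t+1}))\mid H_{t+1}]\bigm| h_t,a\bigr] \le \eta_{t+1}
\]
requires the supremum in $\eta_{t+1}$ to cover all reachable histories, which Remark~\ref{rem:policy_indep} supplies. Measurability issues (Assumption~\ref{ass:meas-selection}) are needed to make $\hat V_t$ and $\pi^{\ALPHABET M}_t$ measurable and to ensure that the POMDP dynamic program is valid.
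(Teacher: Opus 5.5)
Your proposal is correct and follows essentially the paper's route: your cost estimate is Lemma~\ref{lem:P1}, and your transition estimate splits the error through the conditional law of $S_{t+1}$ given $(h_t,a)$ exactly as Lemma~\ref{lem:P2} does (you phrase it against the Lipschitz test function $V^{\ALPHABET M}_{t+1}$ rather than as a Wasserstein bound, which spares you the convexity of $\dWas$), while your two backward inductions comparing $W^{\ALPHABET P}_t$ and $W^{\ALPHABET P,\mu^\EST}_t$ with $V^{\ALPHABET M}_t(\EST_t(h_t))$ are precisely the proof of the AIS bound (Theorem~\ref{thm:AIS}, i.e.\ \cite[Theorem~9]{AIS}), which the paper cites instead of reproducing. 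The only other difference is organizational: the paper proves the abstraction version, Theorem~\ref{thm:main_theorem2}, and obtains this statement by taking $\phi$ to be the identity.
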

We will state and prove a more general version of this result in the next subsection. 

\begin{remark}
Certainty equivalent policies are not appropriate for all models. For instance, in some POMDPs, before taking a control action, the agent has the option to pay a cost to take a sensing action that reveals the true state of the MDP. In such models, the certainty equivalent policy will never choose the sensing action. Therefore, if the sensing action is not too costly, a policy that occasionally pays the sensing cost to learn the true state may outperform certainty equivalent policy. 
\end{remark}

\subsection{Certainty equivalent policies using State Abstraction}\label{sec:abstract}

In problems with large or continuous state spaces, it can be difficult to find an optimal policy $\pi^{\ALPHABET M}$ of MDP~$\ALPHABET M$ due to the  curse of dimensionality. For such large-scale MDPs, one typically obtains an approximately optimal policy for MDP~$\ALPHABET M$ by considering an abstract MDP obtained by state aggregation or state quantization. In such situations, it is natural to consider a certainty equivalent policy based on an optimal policy of the abstract MDP and estimates of the abstract state. In this section, we formally define such a policy and present a generalization of Theorem~\ref{thm:main_theorem} to such settings.

Suppose there is an abstract state space $\ABSTRACT {\ALPHABET S}$ which is equipped with a metric $d_{\ABSTRACT {\ALPHABET S}}$, a (measurable) state abstraction function $\phi \colon \ALPHABET S \to \ABSTRACT {\ALPHABET S}$, and two stochastic kernels $\lambda^P, \lambda^c \colon \ABSTRACT {\ALPHABET S} \to \Delta(\ALPHABET S)$ such that for each $\ABSTRACT s_t \in \ABSTRACT {\ALPHABET S}$, $\lambda^P(\phi^{-1}(\ABSTRACT s_t)|\ABSTRACT s_t) = 1$ and $\lambda^c(\phi^{-1}(\ABSTRACT s_t) | \ABSTRACT s_t) = 1$.

We construct an abstract MDP $\ABSTRACT {\ALPHABET M} \coloneqq \langle \ABSTRACT{\ALPHABET S}, \ALPHABET A$, $\{\ABSTRACT P_t\}_{t=1}^{T-1}$, $\{\ABSTRACT c_t\}_{t=1}^T, T \rangle$ where the dynamics $\ABSTRACT P_t \colon \ABSTRACT {\ALPHABET S} \times \ALPHABET A \to \ABSTRACT {\ALPHABET S}$ and the per-step cost $\ABSTRACT c_t \colon \ABSTRACT {\ALPHABET S} \times \ALPHABET A \to \reals$ are defined as follows: for any measurable $M_{\ABSTRACT {\ALPHABET S}} \subset \ABSTRACT {\ALPHABET S}$, 
\begin{align}
    \ABSTRACT P_t(\ABSTRACT S_{t+1}\in & M_{\ABSTRACT {\ALPHABET S}}|\ABSTRACT s_t,a_t)  \notag \\
    &= \int_{\phi^{-1}(\ABSTRACT s_t)} P_{\ALPHABET S,t}\bigl(\phi(S_{t+1})\in  M_{\ABSTRACT {\ALPHABET S}}|s_t,a_t)\bigr)\lambda^P(ds_t|\ABSTRACT s_t)
    \label{eq:abstract-P}
\end{align}
and
\begin{equation}\label{eq:abstract-c}
    \ABSTRACT c_t(\ABSTRACT s_t,a_t) = \int_{\phi^{-1}(\ABSTRACT s_t)}c_t(s_t,a_t)\lambda^c(ds_t|\ABSTRACT s_t).
\end{equation}

The cost function of the abstract MDP can be viewed as a weighted averaging  of the original MDP cost over all states in $\phi^{-1}(\ABSTRACT s_t)$; a similar interpretation applies for the dynamics in the abstract model as well. 

Note that when $\ABSTRACT {\ALPHABET S} = \ALPHABET S$ and $\phi(s) = s$, the abstract MDP $\ABSTRACT{\ALPHABET M}$ is equal to the MDP $\ALPHABET M = \langle \ALPHABET S, \ALPHABET A, \{P_{S,t}\}_{t=1}^{T-1}, \{c_t\}_{t=1}^T, T \rangle$.

We impose the measurable selection assumption on  MDP~$\ABSTRACT {\ALPHABET M}$.
\begin{assumption}\label{ass:meas-selection2}
    The model $\ABSTRACT {\ALPHABET M}$ satisfies measurable selection.
\end{assumption}

As in Section \ref{sec:gce}, an implication of measurable selection is that there exists an optimal policy $\pi^{\ABSTRACT {\ALPHABET M}} = (\pi^{\ABSTRACT {\ALPHABET M}}_1, \dots, \pi^{\ABSTRACT {\ALPHABET M}}_T)$, where $\pi^{\ABSTRACT {\ALPHABET M}}_t \colon \ABSTRACT {\ALPHABET S}  \to \ALPHABET A$, with associated optimal value functions $(V^{\ABSTRACT {\ALPHABET M}}_1, \dots, V^{\ABSTRACT {\ALPHABET M}}_T)$, $V^{\ABSTRACT {\ALPHABET M}}_t \colon \ABSTRACT {\ALPHABET S} \to \reals$, for MDP $\ABSTRACT{\ALPHABET M}$~\cite{hernandez2012discrete}. 

We now use the optimal policy $\pi^{\ABSTRACT {\ALPHABET M}}$ for the MDP~$\ABSTRACT {\ALPHABET M}$ to define a feasible policy for the POMDP~$\ALPHABET P$. This policy is similar to the certainty equivalent policies of Section \ref{sec:gce} except that (i) we estimate the abstract state $\ABSTRACT S_{t}$, and (ii) use an optimal policy of $\ABSTRACT {\ALPHABET M}$. For convenience, we reuse some of the notation of Section \ref{sec:gce}. 

Suppose we are given a sequence of \emph{abstract state estimation functions} $\{\EST_t\}_{t=1}^T$, where $\EST_t \colon \ALPHABET H_t \to \ABSTRACT {\ALPHABET S}$. 
We say that a history-dependent policy $\mu^\EST = (\mu^\EST_1, \dots, \mu^\EST_T)$ is \emph{ certainty equivalent} with respect to the abstract MDP $\ABSTRACT {\ALPHABET M} =\langle \ABSTRACT{\ALPHABET S}, \ALPHABET A, \{\tilde {P}_{t}\}_{t=1}^{T-1}, \{\tilde {c}_t\}_{t=1}^T, T\rangle$ and estimators  $\{\EST_t\colon \ALPHABET H_t \to \ABSTRACT {\ALPHABET S}\}_{t \ge 1}$ if 
\begin{align}\label{eq:cert_equiv2}
    \mu^\EST_t(h_t) = \pi^{\ABSTRACT {\ALPHABET M}}_t(\EST_t(h_t)).
\end{align}
In other words, a certainty equivalent policy treats   $\EST_t(h_t)$ as an error-free estimate of the state of the abstract MDP and uses the optimal policy of $\ABSTRACT {\ALPHABET M}$ to take its action.  As before, we are interested in an upper bound on the gap between the value functions of policy $\mu^\EST$ and the optimal value functions of the POMDP, i.e. a bound on  
 $W^{{\ALPHABET{P}},\mu^\EST}_t(h_t) - W^{\ALPHABET{P}}_t(h_t)$.

We impose the following technical assumption on the model and the state abstraction.
\begin{assumption}\label{ass:lipschitz2} 
    There exist a sequence of concave and non-decreasing functions $F^P_t, F^c_t \colon \reals_{\ge 0} \to \reals_{\ge 0}$, $t \in \{1, \dots, T\}$, such that for any $s, s'\in \ALPHABET S$ and $a \in \ALPHABET A$, we have
    \begin{equation}\label{eq:Lip-P-2}
        \dWas(P^{\phi}_{S,t}(\cdot | s, a), P^{\phi}_{S,t}(\cdot | s', a)) \le F^P_t(\dhSphi(s, s')),
    \end{equation}
    and
    \begin{equation}\label{eq:Lip-c-2}
        \bigl| c_t(s,a) - c_t(s',a) \bigr| \le F^c_t(\dhSphi(s, s'))
    \end{equation}
    where $P^{\phi}_{S,t}$ is a stochastic kernel from $\ALPHABET S \times \ALPHABET A$ to $\Delta(\ABSTRACT {\ALPHABET S})$ defined as
    \[
        P^{\phi}_{S,t}(M_{\ABSTRACT {\ALPHABET S}} | s_t, a_t) = P_{\ALPHABET S,t}\bigl(\phi(S_{t+1})\in  M_{\ABSTRACT {\ALPHABET S}}|s_t, a_t\bigr)
    \]
    for all Borel subsets $M_{\ABSTRACT {\ALPHABET S}}$ of $\ABSTRACT {\ALPHABET S}$.
\end{assumption}
Assumption~\ref{ass:lipschitz2} implies that $\phi: \ALPHABET S \to \ABSTRACT{\ALPHABET S}$ is a good state abstraction in the following sense: If $\phi(s)$ is close to $\phi(s')$, then for any action $a$, the per-step costs at $s$ and $s'$ are close and the probability distributions of the next abstracted state given $s$ and $s'$ are close.

Our results depend on the {quality} of the estimates produced by  $\EST_t$. For that purpose, for each time~$t$ we define the worst-case value of the conditional expected estimation error
\begin{align}\label{eq:closeness_assm2}
      \eta_t &:= \sup_{h_t \in \ALPHABET H_t} \EXP[ \dhS(\phi(S_t), \EST_t(h_t)) |  h_t ].
    \end{align}
As we stated in Remark~\ref{rem:policy_indep}, the right hand side in Eq.~\eqref{eq:closeness_assm2} does not depend on the policy because conditional probability distribution of the state $S_t$ given history $h_t$ is policy independent~\cite{Astrom1965,Smallwood1973}. 
\begin{assumption}\label{ass:finite_eta2}
For $t=1,2,\ldots,T,$ we have $\eta_t < \infty$ where $\eta_t$ is given by~\eqref{eq:closeness_assm2}.
\end{assumption}

We can now state the main result.
\begin{theorem}\label{thm:main_theorem2}
Define
\[\varepsilon_t = F^c_t (\eta_t) \text{ and } \delta_t = F^P_t (\eta_t) + \eta_{t+1}\]
where $\eta_t$ is given by \eqref{eq:closeness_assm2}. Then, under Assumptions~\ref{ass:meas-selection2}, \ref{ass:lipschitz2} and \ref{ass:finite_eta2}, we have that the certainty equivalent policy $\mu^\EST$ (defined in~\eqref{eq:cert_equiv2}) satisfies 
    \begin{equation}\label{eq:main_theorem_rep_ex2}
        W^{\ALPHABET P, \mu^\EST}_t(h_t) - W^{\ALPHABET P}_t(h_t) \le 2 \alpha_t
    \end{equation}
    where
    \begin{equation}\label{eq:alpha}
        \alpha_t = \varepsilon_t + \sum_{\tau = t}^{T-1}\bigl[ \delta_{\tau} \Lip(V^{\ABSTRACT{\ALPHABET M}}_{\tau+1}) + \varepsilon_{\tau+1} \bigr]
    \end{equation}
    and $\{V^{\ABSTRACT{\ALPHABET M}}_{t}\}_{t=1}^T$ are the optimal value functions for MDP~$\ABSTRACT{\ALPHABET M}$.
\end{theorem}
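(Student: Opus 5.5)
The plan is to derive Theorem~\ref{thm:main_theorem2} from the approximate information state (AIS) framework of~\cite{AIS}, taking the estimate $\ABSTRACT Z_t \coloneqq \EST_t(h_t)\in\ABSTRACT{\ALPHABET S}$ as the candidate AIS. The approximate dynamics and cost are those of the abstract MDP, $\ABSTRACT P_t$ and $\ABSTRACT c_t$. With this choice, the AIS-based policy $h_t \mapsto \pi^{\ABSTRACT{\ALPHABET M}}_t(\EST_t(h_t))$ is exactly $\mu^\EST$, and the AIS dynamic program is the dynamic program of $\ABSTRACT{\ALPHABET M}$. The AIS approximation theorem of~\cite{AIS} is stated with an integral probability metric; for the Wasserstein metric it says the following. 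If for all $h_t,a_t$ we have $|\EXP[c_t(S_t,a_t)|h_t,a_t]-\ABSTRACT c_t(\EST_t(h_t),a_t)|\le\varepsilon_t$ and $\dWas\bigl(\PR(\EST_{t+1}(H_{t+1})\in\cdot\,|h_t,a_t),\ABSTRACT P_t(\cdot|\EST_t(h_t),a_t)\bigr)\le\delta_t$, then the AIS policy has sub-optimality at most $2\alpha_t$, with $\alpha_t$ as in~\eqref{eq:alpha}. Here $\Lip(V^{\ABSTRACT{\ALPHABET M}}_{\tau+1})$ plays the role of the Minkowski functional $\rho_{\mathfrak F}$. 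I will re-derive this in the finite-horizon form by the usual two-sided induction if needed: compare $W^{\ALPHABET P}_t$ with $V^{\ABSTRACT{\ALPHABET M}}_t(\EST_t(h_t))$, and then compare $W^{\ALPHABET P,\mu^\EST}_t$ with it. Each comparison contributes $\alpha_t$. Measurable selection (Assumption~\ref{ass:meas-selection2}) guarantees that $\pi^{\ABSTRACT{\ALPHABET M}}$ exists.

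The cost condition is handled as follows. Write $\ABSTRACT s=\EST_t(h_t)$. By~\eqref{eq:abstract-c}, $\ABSTRACT c_t(\ABSTRACT s,a)=\int c_t(s',a)\lambda^c(ds'|\ABSTRACT s)$ with $\phi(s')=\ABSTRACT s$ $\lambda^c$-a.s. Hence
\[
|\EXP[c_t(S_t,a)|h_t]-\ABSTRACT c_t(\ABSTRACT s,a)|\le \EXP\Bigl[\int |c_t(S_t,a)-c_t(s',a)|\lambda^c(ds'|\ABSTRACT s)\Bigm| h_t\Bigr].
\]
Assumption~\ref{ass:lipschitz2} bounds the integrand by $F^c_t(\dhS(\phi(S_t),\ABSTRACT s))$. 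Jensen's inequality (concavity) and monotonicity of $F^c_t$, together with~\eqref{eq:closeness_assm2}, then give $\le F^c_t(\eta_t)=\varepsilon_t$.

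The dynamics condition is handled by the triangle inequality for $\dWas$, splitting into two pieces. First, $\dWas\bigl(\text{law}(\EST_{t+1}(H_{t+1})|h_t,a_t),\text{law}(\phi(S_{t+1})|h_t,a_t)\bigr)\le \EXP[\dhS(\phi(S_{t+1}),\EST_{t+1}(H_{t+1}))|h_t,a_t]\le\eta_{t+1}$. This uses the natural coupling, the tower property, and the fact that the conditional expectation given $h_{t+1}$ is at most $\eta_{t+1}$. Second, $\text{law}(\phi(S_{t+1})|h_t,a_t)=\int P^\phi_{S,t}(\cdot|s,a)\,\PR(ds|h_t)$ and $\ABSTRACT P_t(\cdot|\ABSTRACT s,a)=\int P^\phi_{S,t}(\cdot|s',a)\lambda^P(ds'|\ABSTRACT s)$. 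Convexity of $\dWas$ under mixtures (couple via the product measure $\PR(ds|h_t)\otimes\lambda^P(ds'|\ABSTRACT s)$) bounds their distance by $\EXP[\int F^P_t(\dhS(\phi(S_t),\ABSTRACT s))\lambda^P(ds'|\ABSTRACT s)|h_t]$, where I use $\phi(s')=\ABSTRACT s$. Jensen's inequality then gives $\le F^P_t(\eta_t)$. Summing the two pieces yields $\delta_t$.

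I expect the main obstacle to be the precise invocation of the AIS theorem. One issue is that $\EST_{t+1}$ need not be recursively updatable, but the AIS bound for value functions (as opposed to the learning results) only requires the two conditions above. The other issue is the one-sided use of the Lipschitz constant: $|\EXP[V(\ABSTRACT Z_{t+1})|h_t,a_t]-\int V\,d\ABSTRACT P_t|\le\Lip(V^{\ABSTRACT{\ALPHABET M}}_{t+1})\,\delta_t$ by Kantorovich–Rubinstein duality. Throughout I assume $\Lip(V^{\ABSTRACT{\ALPHABET M}}_{\tau+1})<\infty$, as is implicit in the statement. If invoking~\cite{AIS} directly is awkward, I will run the backward induction explicitly. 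The inductive claim is $|W^{\ALPHABET P}_t(h_t)-V^{\ABSTRACT{\ALPHABET M}}_t(\EST_t(h_t))|\le\alpha_t$ and the same for $W^{\ALPHABET P,\mu^\EST}_t$, and the two estimates above are exactly what the inductive step requires.
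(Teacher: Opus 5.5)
Your proposal is correct and follows the paper's proof essentially step for step. The paper also takes $(\EST,\ABSTRACT c,\ABSTRACT P)$ as the AIS generator and verifies (AP1) via Jensen's inequality and concavity of $F^c_t$ (Lemma~\ref{lem:P1}); it verifies (AP2) with the same triangle-inequality split into an $\eta_{t+1}$ coupling term and an $F^P_t(\eta_t)$ mixture-convexity term (Lemmas~\ref{lem:dWas} and~\ref{lem:P2}), and then invokes Theorem~\ref{thm:AIS}. The only cosmetic difference is that you merge the two convexity steps into a single product-measure coupling.
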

We will prove this result in Section~\ref{sec:analysis}. Note that when $\ABSTRACT {\ALPHABET S} = \ALPHABET S$ and $\phi(s) = s$, Theorem~\ref{thm:main_theorem2} reduces to Theorem~\ref{thm:main_theorem}. As illustrated in the corollary below, Theorem~\ref{thm:main_theorem2} also provide sub-optimality bounds for using a policy of an abstract MDP in the original MDP, which may be viewed as a finite horizon version of~\cite{gelada2019deepmdp}.

\begin{corollary}\label{cor:eta_zero}
    For any Markovian policy $\pi$ of MDP~$\ALPHABET M$, let $V^{\ALPHABET M, \pi}_t \colon \ALPHABET S \to \reals$ denote the value function of policy~$\pi$. Given the optimal policy $\pi^{\tilde {\ALPHABET M}} = (\pi^{\tilde{\ALPHABET M}}_1, \dots, \pi^{\tilde {\ALPHABET M}}_T)$ of the abstract MDP~$\tilde {\ALPHABET M}$, define a feasible policy $\bar \pi$ of MDP~$\ALPHABET M$ as follows: $\bar \pi = (\pi^{\tilde {\ALPHABET M}}_1 \circ \phi, \dots$, ${\pi^{\tilde {\ALPHABET M}}_T \circ \phi})$. Then, under Assumptions~\ref{ass:meas-selection2} and~\ref{ass:lipschitz2}, for any time~$t$ and any realization $s_t$ of $S_t$, we have
    \[
        V^{\ALPHABET M, \bar \pi}_t(s_t) - V^{\ALPHABET M}(s_t) \le 2 \alpha_t
    \]
    where $\alpha_t$ is given by~\eqref{eq:alpha} with $\varepsilon_t = F^c_t(0)$ and $\delta_t = F^P_t(0)$. 
\end{corollary}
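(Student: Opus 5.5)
The plan is to deduce Corollary~\ref{cor:eta_zero} from Theorem~\ref{thm:main_theorem2} by viewing the fully observed MDP~$\ALPHABET M$ as a degenerate POMDP. Let $\ALPHABET P$ be the POMDP with the same state space, action space, state dynamics $P_{S,t}$ and costs $c_t$, and with observation $Y_t = S_t$. Concretely, the initial distribution puts $Y_1 = S_1$ with $S_1 \sim \delta_{s_1}$ (or any initial law), and $P_t(M_S, M_Y \mid s,a) = P_{S,t}(M_S \cap M_Y \mid s,a)$. The state marginal of this kernel is $P_{S,t}$, so the abstract MDP $\ABSTRACT{\ALPHABET M}$ built from $\ALPHABET P$ through~\eqref{eq:abstract-P}--\eqref{eq:abstract-c} is exactly the one in the corollary. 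Hence Assumptions~\ref{ass:meas-selection2} and~\ref{ass:lipschitz2} hold verbatim.

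Next I would choose the estimator $\EST_t(h_t) = \phi(y_t)$. Since $Y_t = S_t$ almost surely, we have $\dhS(\phi(S_t), \EST_t(h_t)) = 0$ a.s. given $h_t$. Therefore $\eta_t = 0$ for all $t$, Assumption~\ref{ass:finite_eta2} holds trivially, and the theorem's $\varepsilon_t = F^c_t(\eta_t)$ and $\delta_t = F^P_t(\eta_t) + \eta_{t+1}$ reduce to $F^c_t(0)$ and $F^P_t(0)$, as claimed in the statement. The certainty equivalent policy~\eqref{eq:cert_equiv2} is $\mu^\EST_t(h_t) = \pi^{\ABSTRACT{\ALPHABET M}}_t(\phi(s_t))$, which is exactly $\bar\pi$ read on the history.

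It remains to identify the value functions. Because the history $h_t = (s_{1:t}, a_{1:t-1})$ contains the current state and the state process is Markov, $W^{\ALPHABET P, \mu^\EST}_t(h_t) = V^{\ALPHABET M, \bar\pi}_t(s_t)$, since $\mu^\EST$ depends only on $s_t$. For the optimal values, the standard MDP result is that Markov policies are optimal among history-dependent ones, which holds under measurable selection for $\ALPHABET M$. This gives $W^{\ALPHABET P}_t(h_t) = V^{\ALPHABET M}_t(s_t)$. Substituting both identities into~\eqref{eq:main_theorem_rep_ex2} gives the claim for every $s_t$ that appears as the last component of some history.

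The main obstacle is this identification of $W^{\ALPHABET P}_t$ with $V^{\ALPHABET M}_t$, because the corollary only assumes measurable selection for $\ABSTRACT{\ALPHABET M}$, not for $\ALPHABET M$. I would avoid needing an optimal Markov policy of $\ALPHABET M$ by interpreting $V^{\ALPHABET M}_t$ as the infimum over all (history-dependent) policies; the inequality $W^{\ALPHABET P}_t(h_t) \le V^{\ALPHABET M}_t(s_t)$ is then immediate, and the reverse direction follows from the Markov property of the conditional law of the future given $h_t$. A second, minor issue is that the theorem's bound holds for histories actually reached, so to get the claim for arbitrary $s_t$ I would choose the initial distribution or the conditioning point so that $s_t$ is attainable. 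Alternatively, I would note that the proof of Theorem~\ref{thm:main_theorem2} is pointwise in $h_t$.
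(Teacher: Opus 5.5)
Your proposal is correct and follows the paper's proof. Like the paper, you specialize Theorem~\ref{thm:main_theorem2} to the degenerate POMDP with $Y_t = S_t$ and $\EST_t(h_t) = \phi(y_t)$, so that $\eta_t = 0$, $\mu^\EST = \bar\pi$, and the two POMDP value functions reduce to $V^{\ALPHABET M,\bar\pi}_t(s_t)$ and $V^{\ALPHABET M}_t(s_t)$. Two of your details go beyond what the paper addresses explicitly: reading $V^{\ALPHABET M}_t$ as the infimum over history-dependent policies, since measurable selection for $\ALPHABET M$ is not assumed (this reading also gives the strongest form of the bound), and noting that the bound holds pointwise in $h_t$.
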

\begin{proof}
    This is an immediate consequence of Theorem~\ref{thm:main_theorem2} by considering the trivial setting where $Y_t = S_t$ (and thus, $h_t = (s_{1:t}, a_{1:t-1})$) and take $\EST(h_t) = \phi(s_t)$, which implies $\eta_t = 0$ for all $t$. In this case, $W^{\ALPHABET P}_t(h_t) = V^{\ALPHABET M}_t(s_t), \mu^\EST = \bar \pi$ and $W^{\ALPHABET P,\mu^\EST}_t(h_t) = V^{\ALPHABET M,\bar \pi}_t(s_t)$. 
\end{proof}

\section{Illustrative examples} \label{sec:examples}
In this section we present several examples to illustrate observation models (and corresponding estimators) where certainty equivalent policies may be useful. We apply our results to derive explicit bounds on the sub-optimality of certainty equivalent policies for specific observation models. 

\subsection{Bounded observation noise}\label{sec:bounded}

\subsubsection{System model}
Consider a POMDP where $\ALPHABET Y = \ALPHABET S$ and the system dynamics $P_t$ are such that
\[
    d_{\ALPHABET S}(Y_t, S_t) \le r,
\]
where $r \in [0, \infty)$. Moreover, we assume that the MDP model $\ALPHABET M$ satisfies measurable selection (Assumption~\ref{ass:meas-selection}) and  that the dynamics and per-step cost are Lipschitz, i.e., there exist non-negative finite constants $L_t^P$ and $L_t^c$ such that Assumption~\ref{ass:lipschitz} is satisfied with $F_t^P(x)=L_t^P x$ and $F_t^c (x) = L_t^c x$ (see Remark~\ref{rem:lip_linear}).

\subsubsection{Certainty equivalent policy}
For this example, we consider certainty equivalent policies with respect to the original MDP $\ALPHABET M$, i.e., take the state abstraction function $\phi(s) = s$.  Furthermore, we take the state estimate to be the last observation, i.e., $\EST_t(h_t) = y_t$. Then, the certainty equivalent policy is given by
\[
    \mu_t^{\EST}(h_t) = \pi^{\ALPHABET M}_t(y_t).
\]

\subsubsection{Sub-optimality bound}
We have assumed that Assumptions~\ref{ass:meas-selection} and~\ref{ass:lipschitz} are satisfied. Moreover,
\[
    \EXP[ \dS(S_t, \EST_t(H_t)) | h_t ] = \EXP\bigl[d_{\ALPHABET S}(S_t,Y_t)| h_t\bigr] 
    \le r.
\]
Therefore, $\eta_t \le r$ and Assumption~\ref{ass:finite_eta} is also satisfied. Furthermore, the $\varepsilon_t$ and $\delta_t$ in Theorem~\ref{thm:main_theorem} can be upper bounded by
\[
    \varepsilon_t \le r L_t^c 
    \quad\text{and}\quad
    \delta_t \le r(1 + L_t^P).
\]
Hence, the bound in Theorem~\ref{thm:main_theorem} can be explicitly written as 
\begin{equation}
    W^{\ALPHABET P,\mu^\EST}_t(h_t) - W^{\ALPHABET P}_t(h_t) \le 2r L_t^{\ALPHABET M}
\end{equation}
where
\begin{equation}\label{eq:LT}
    L^{\ALPHABET M}_t = \biggl[L_t^c + \sum_{\tau = t}^{T-1}\bigl[ (1 + L_{\tau}^P) \Lip(V^{\ALPHABET M}_{\tau+1}) + L_{\tau+1}^c \bigr] \biggr]
\end{equation}
is a constant that depends on the Lipschitz constants of the dynamics, per-step cost, and the optimal MDP value function.

This bound scales linearly with $r$, which means that as the observation becomes closer to the underlying state (i.e., ``observation noise'' becomes small), the performance of the certainty equivalent policy approaches that of the optimal POMDP policy. 

\subsection{Intermittently degraded observation}

\subsubsection{System model}
Consider a POMDP where $\ALPHABET Y = \ALPHABET S$ and the system dynamics $P_t$ is such that the controller either gets a good observation (indicated by event $E_t$) or a bad observation (indicated by $E_t^c$). Under $E_t$, 
\(
    d_{\ALPHABET S}(Y_t, S_t) \le r
\)
while under $E_t^c$, 
\(
    d_{\ALPHABET S}(Y_t, S_t) \le R
\)
where $0 \le r \le R < \infty$.  We assume that for any history $h_t$, $\PR(E_t^c | h_t) \le p$. 

Moreover, we assume that the MDP model $\ALPHABET M$ satisfies measurable selection (Assumption~\ref{ass:meas-selection}) and  that the dynamics and per-step cost are Lipschitz, i.e., there exist non-negative finite constants $L_t^P$ and $L_t^c$ such that Assumption~\ref{ass:lipschitz} is satisfied with $F_t^P(x)=L_t^P x$ and $F_t^c (x) = L_t^c x$ (see Remark~\ref{rem:lip_linear}).

\subsubsection{Certainty equivalent policy}
As for the example in Sec.~\ref{sec:bounded}, we take $\phi(s) = s$ and $\EST_t(h_t) = y_t$. Then, the certainty equivalent policy is given by
\[
    \mu_t^{\EST}(h_t) = \pi^{\ALPHABET M}_t(y_t).
\]

\subsubsection{Sub-optimality bound}
We have assumed that Assumption~\ref{ass:meas-selection} and~\ref{ass:lipschitz} are satisfied. Moreover,
\begin{align}
    \EXP[ \dS(S_t, \EST_t(H_t)) | h_t ]  &= \EXP\bigl[d_{\ALPHABET{S}}(S_t,Y_t)| h_t\bigr] \nonumber \\
    &\le (1-p) r + p R
\end{align}
Hence, we have $\eta_t \leq (1-p)r + pR$ and Assumption~\ref{ass:finite_eta} is also satisfied. Furthermore, the $\varepsilon_t$ and $\delta_t$ in Theorem~\ref{thm:main_theorem} can be upper bounded by
\[
    \varepsilon_t \le [ (1-p)r + p R ] L_t^c \quad\text{and}\quad \delta_t \le [(1-p) r + p R ] (1 + L_t^P).
\]
Therefore, the bound in Theorem~\ref{thm:main_theorem} can be explicitly written as 
\begin{align}
    &W^{\ALPHABET P,\mu^\EST}_t(h_t) - W^{\ALPHABET P}_t(h_t)
    \leq 2[(1-p) r + p R ] L_t^{\ALPHABET M}
\end{align}
where $L^{\ALPHABET M}_t$ is as defined in~\eqref{eq:LT}.

These results demonstrate that when the state estimation error $[(1-p) r + p R ]$ is small, either due to observations with small noise or observations that are frequently accurate, certainty equivalent policies can perform near-optimally. The bounds provide a quantitative measure of the sub-optimality in terms of the estimation error and the Lipschitz constants of the model.

\subsection{Bounded observation noise with state quantization}

\subsubsection{System model}
Consider the bounded observation noise model of Sec.~\ref{sec:bounded} where $\ALPHABET Y = \ALPHABET S$ and the observation model is such that $d_{\ALPHABET S}(Y_t, S_t) \le r$. In addition, an abstract model $\hat {\ALPHABET M}$ is given, which is constructed by partitioning the state space $\ALPHABET S$ into a finite collection $\{\Psi_k\}_{k=1}^{K}$ of quantization cells, each with a representative element $\ABSTRACT s_k \in \Psi_k$. The abstract state space is $\ABSTRACT {\ALPHABET S} = \{\ABSTRACT s_1, \dots, \ABSTRACT s_K\}$ and the state abstraction function is 
$\phi: \ALPHABET S \to \ABSTRACT{\ALPHABET S}$ given by $\phi(s) = \ABSTRACT s_k$ for all $s \in \Psi_k$, $k \in \{1, \dots, K\}$. 
Moreover, the stochastic kernels $\lambda^P(\cdot \mid \ABSTRACT s_k)$ and $\lambda^c(\cdot \mid \ABSTRACT s_k)$ are Dirac delta measures on~$\ABSTRACT {s}_k$, $k \in \{1,\dots, K\}$. The abstract dynamics $\tilde P$ and abstract cost $\tilde c$ are constructed as in~\eqref{eq:abstract-P} and~\eqref{eq:abstract-c}. We also define a metric $d_{\ABSTRACT{\ALPHABET S}}$ on $\ABSTRACT{\ALPHABET S}$ by
\[
    d_{\ABSTRACT{\ALPHABET S}}(\ABSTRACT s_1, \ABSTRACT s_2) \coloneqq d_{\ALPHABET S}(\ABSTRACT s_1, \ABSTRACT s_2), \quad \forall \ABSTRACT s_1, \ABSTRACT s_2 \in \ABSTRACT{\ALPHABET S} \subset \ALPHABET S,
\]

We assume Assumptions~\ref{ass:meas-selection2} and~\ref{ass:lipschitz2} are satisfied.

\subsubsection{Certainty equivalent policy}
For this example, we consider certainty equivalent policies with respect to the abstract MDP $\ABSTRACT{\ALPHABET M}$.  Furthermore, 
we take the abstract state estimate to be the quantized value of the last observation, i.e., $\EST_t(h_t) = \phi(y_t)$. Then, the certainty equivalent policy is 
\[
    \mu_t^{\EST}(h_t) = \pi^{\ABSTRACT M}\bigl(\phi(y_t)\bigr).
\]

\subsubsection{Sub-optimality bound}
We have assumed that Assumption~\ref{ass:meas-selection2} and~\ref{ass:lipschitz2} are satisfied. Let
$R$ denote the maximum radius of a quantization cell, i.e., 
\[
   R \coloneqq \max_{k \in \{1, \dots, K\}} \sup_{s \in \Psi_k}d_{\ALPHABET S}(s, \phi(s)).
\]
Then, by triangle inequality, we have
\begin{align}
  &\EXP[ \dhS(\phi(S_t), \EST_t(h_t)) |  h_t ]  
  = \EXP[ \dS(\phi(S_t), \phi(Y_t) |  h_t ] \nonumber \\
   &\leq \EXP \bigl[ \dS\bigl(\phi(S_t),S_t) + \dS (S_t, Y_t) + \dS\bigl(Y_t, \phi(Y_t)\bigr) | h_t \bigr] \nonumber \\
   &\leq R + r + R = r + 2R.
\end{align}
Hence, we have $\eta_t \leq r + 2R$ and Assumption~\ref{ass:finite_eta2} is also satisfied. Furthermore, the $\varepsilon_t$ and $\delta_t$ in Theorem~\ref{thm:main_theorem2} can be upper bounded by
\[
    \varepsilon_t \leq F_t^c(r + 2R)
    \quad\text{and}\quad
    \delta_t \leq F_t^P(r + 2R) + r + 2R.
\]
Therefore, we can upper bound the sub-optimality of using the certainty equivalent policy by the bound~\eqref{eq:main_theorem_rep_ex2} in Theorem~\ref{thm:main_theorem2}.

\subsection{Certainty equivalence in learning/adaptive control}
\subsubsection{System model}
Consider a parameterized MDP $\ALPHABET M_X(\theta)$ with state space $\ALPHABET X$, action space $\ALPHABET A$, time-invariant dynamics $P_{X,\theta}$ and time-invariant per-step cost $\ell_{\theta}$, where the parameters $\theta \in \Theta$ and are distributed according to some probability distribution~$P_{\Theta}$ independent of noise in the dynamics. We assume that $\ALPHABET X$ and $\Theta$ are metric spaces with metrics $d_{\ALPHABET X}$ and $d_{\Theta}$, respectively.

The controller doesn't know the parameters $\theta$ but knows the history of state and actions $h_t = (x_{1:t}, a_{1:t-1})$. The above model can be viewed as an POMDP with state space $\ALPHABET S = \ALPHABET X \times \Theta$, observation space $\ALPHABET X \times \reals$, where $S_t = (X_t, \theta)$ and $Y_t = (X_t, \ell(X_{t-1}, A_{t-1}))$. We take $d_{\ALPHABET S}((x_1,\theta_1), (x_2,\theta_2)) = d_{\ALPHABET X}(x_1, x_2) + d_{\Theta}(\theta_1, \theta_2)$.  Observe that the MDP $\ALPHABET M$ corresponding to this POMDP is equivalent to $\ALPHABET M_X(\theta)$. We denote the optimal policy of this MDP by $\pi^{\ALPHABET M}( x, \theta) = \pi^{\ALPHABET M_X(\theta)}(x)$.

We assume that for all $\theta \in \Theta$, the model $\ALPHABET M_X(\theta)$ satisfies measurable selection (Assumption~\ref{ass:meas-selection}). Moreover,
there exist non-negative finite constants $L^P$ and $L^c$ such that for any $x, x' \in \ALPHABET X$ and $\theta, \theta' \in \Theta$, we have
\begin{align}\label{eq:Lip-P-learn}
    \dWas(P_{X, \theta}(\cdot | x, a),& P_{X, \theta'}(\cdot | x', a)) \notag \\
    &\le L^P (d_{\ALPHABET X}(x,x') + d_{\Theta}(\theta,\theta')),
\end{align}
and
\begin{equation}\label{eq:Lip-c-learn}
    \bigl| \ell_{\theta}(x,a) - \ell_{\theta'}(x',a) \bigr| \leq L^c (d_{{\ALPHABET X}}(x,x') + d_{\Theta}(\theta,\theta')).
\end{equation}

\subsubsection{Certainty equivalent policy}

As in the previous examples, we consider $\phi(s) = s$ but consider a general estimator $\EST_t(h_t) = (x_t, \hat \theta_t)$ where $\hat\theta_t$ is some estimate of $\theta$ based on $h_t$, e.g.,  the MMSE estimator $\ESTIMATE\theta_t = \EXP[\theta | h_t]$. Then, the certainty equivalent policy is
\[
    \mu^{\EST}_t(h_t) = \pi_t^{\ALPHABET M}(x_t, \hat\theta_t).
\]

\subsubsection{Sub-optimality bound}
We have assumed that Assumption~\ref{ass:meas-selection} and~\ref{ass:lipschitz} are satisfied. Moreover,
\begin{align}
\eta_t &= 
    \sup_{h_t \in \ALPHABET H_t} \EXP[ \dS(S_t, \EST_t(H_t)) | h_t ] 
    =\sup_{h_t \in \ALPHABET H_t} \EXP[ d_{\Theta}(\theta_t, \hat \theta_t) | h_t].
    \label{eq:eta_mdp_learning}
\end{align}
Thus, if Assumption~\ref{ass:finite_eta} holds, the $\varepsilon_t$ and $\delta_t$ in Theorem~\ref{thm:main_theorem} can be upper bounded by
\[
    \varepsilon_t \le  L^c \eta_t \quad\text{and}\quad \delta_t \le L^P \eta_t + \eta_{t+1}.
\]
Therefore, we can bound the sub-optimality in using the certainty equivalent policy by the bound~\eqref{eq:main_theorem_rep_ex1} given in Theorem~\ref{thm:main_theorem}. 

These results show that the performance of certainty equivalent policies depends on the performance $\eta_t$ of the parameter estimation. If $\eta_t$ decays sufficiently fast, e.g., exponentially fast, then we can obtain uniform upper bounds on the performance error $2 \alpha_t$ in~\eqref{eq:main_theorem_rep_ex1} even when $T \to \infty$.

\subsection{Control with event-triggered communication}

\subsubsection{System model}
Consider a system consisting of a plant, a sensor co-located with the plant, and a remote controller. Let $X_t \in \ALPHABET X$ and $A_t \in \ALPHABET A$ denote the state and control input of the plant. The state of the plant evolves according to a controlled transition kernel $P_{X,t} \colon \ALPHABET X \times \ALPHABET A \to \Delta(\ALPHABET X)$. 

The sensor observes the current state $X_t$ and decides whether or not to transmit the state. Let $Y_t \in \ALPHABET X \cup \{ \mathfrak{E} \}$ denote the observation of remote controller, i.e.,
\[
    Y_t = \begin{cases}
        \mathfrak{E} & \text{if sensor does not communicate}\\
        X_t       & \text{if sensor communicates}
    \end{cases}
\]
where $\mathfrak{E}$ denotes a null observation.

The remote controller generates the action $A_t$ according to a general history dependent policy $\mu = (\mu_1, \dots, \mu_T)$ and incurs a per-step cost $c_t(x_t,a_t)$. 

The above problem is a decentralized control problem where both the communication policy at the sensor and the control policy at the remote controller have to be determined. We consider the setting when the communication policy is fixed to be an \emph{event-triggered} communication policy~\cite{molin2014suboptimal, mazo2011decentralized,heemels2012introduction}, which operates as follows. It is assumed that 
the remote controller keeps track of a state estimate $\hat X_{t|t} \in \ALPHABET X$ as follows:
\begin{equation}\label{eq:X-t-t}
    \hat X_{t|t} = \begin{cases}
        Y_t & \text{if } Y_t \neq \mathfrak{E} \\
        \hat X_{t|t-1}, & \text{if } Y_t = \mathfrak{E}
    \end{cases}
\end{equation}
where $\hat X_{1|0} = \EXP[X_1]$ and 
\begin{equation}\label{eq:X-t-t-1}
    \hat X_{t|t-1} = g(\hat X_{t-1|t-1}, A_{t-1}), \quad t > 1,
\end{equation}
where $g \colon \ALPHABET X \times \ALPHABET A \to \ALPHABET X$ is a pre-specified update function.

In an event-triggered policy, the sensor transmits when the following inequality is satisfied
\[
    d_{\ALPHABET X}(X_t, \hat X_{t|t-1}) > r.
\]
where $r$ is a pre-specified constant. This policy ensure that $d_{\ALPHABET X}(X_t, \hat X_{t|t}) \le r$. The objective then is to find the best control strategy at the remote controller.

Once the event triggered policy is fixed, the above model corresponds to a POMDP $\ALPHABET P$ with state $S_t = (X_t, \hat X_{t|t-1})$, observation $Y_t$, and action $A_t$. Let $\ALPHABET M$ denote the corresponding MDP, in which the controller has access to $S_t$. 

We consider an abstract MDP $\ABSTRACT{\ALPHABET M}$ with state space $\ALPHABET X$ which is constructed using a state abstraction function $\phi(x, \hat x) = x$ and stochastic kernels $\lambda^P(\cdot | x)$ and $\lambda^c(\cdot | x)$ as Dirac delta measures on $(x,x)$. Then, the dynamics $\tilde P_t$ of the abstract MDP is equal to $P_{X,t}$ and the per-step cost $\tilde c_t$ is equal to $c_t$. Thus, MDP $\tilde M = \langle \ALPHABET X, \ALPHABET A, \{ P_{X,t}\}_{t=1}^{T-1}, \{ c_t\}_{t=1}^T \rangle$.  We assume that $\ABSTRACT{\ALPHABET M}$ satisfies Assumption~\ref{ass:meas-selection2} and there exist concave and non-decreasing functions $F^P_t, F^c_t \colon \reals_{\ge 0} \to \reals_{\ge 0}$, $t \in \{1, \dots, T\}$, such that for any $x,x' \in \ALPHABET X$ and $a \in \ALPHABET A$, we have
\begin{equation}\label{eq:dynamics-event-triggered-lip}
     \dWas(P_{X,t}(\cdot | x, a), P_{X,t}(\cdot | x', a)) \le F^P_t(d_{\ALPHABET X}(x, x'))
\end{equation}
and
\begin{equation}\label{eq:cost-event-triggered-lip}
    \bigl| c_t(x,a) - c_t(x',a) \bigr| \le F^c_t(d_{\ALPHABET X}(x, x')).
\end{equation}
Assumption~\ref{ass:meas-selection2} implies that there exists an optimal policy for MDP $\ABSTRACT{\ALPHABET M}$, which we denote by $\pi^{\ABSTRACT {\ALPHABET M}}$. Moreover, it can be verified that~\eqref{eq:dynamics-event-triggered-lip} and~\eqref{eq:cost-event-triggered-lip} implies Assumption~\ref{ass:lipschitz2}.

\subsubsection{Certainty equivalent policy}
For this example, we consider certainty equivalent policies with respect to the abstract MDP $\ABSTRACT{\ALPHABET M}$.  Furthermore, we take the state estimate to be $\EST_t(h_t) = \hat x_{t|t}$, which is recursively computed from the history using~\eqref{eq:X-t-t}. Then, the certainty equivalent policy is given by
\[
    \mu_t^{\EST}(h_t) = \pi^{\ABSTRACT{\ALPHABET M}}_t(\hat x_{t|t}).
\]

\subsubsection{Sub-optimality bound}
We have assumed Assumption~\ref{ass:meas-selection2} and assumed sufficient conditions that imply Assumption~\ref{ass:lipschitz2}. 
Recall that the event-triggered sensor transmission policy ensures that $d_{\ALPHABET X}(X_t, \hat X_{t|t}) \le r$.
Therefore,
\[
    \EXP[d_{\ALPHABET X}(\phi(S_t),\EST(H_t))|h_t] = \EXP[d_{\ALPHABET X}(X_t, \hat X_{t|t}) | h_t] \leq r.
\]
Hence, $\eta_t \leq r$ and Assumption~\ref{ass:finite_eta2} is satisfied.  Furthermore, the $\varepsilon_t$ and $\delta_t$ in Theorem~\ref{thm:main_theorem2} can be upper bounded by
\[
    \varepsilon_t \leq F_t^c(r)
    \quad\text{and}\quad
    \delta_t \leq F_t^P(r) + r. 
\]
Therefore, we can upper bound the sub-optimality of using the certainty equivalent policy by the bound~\eqref{eq:main_theorem_rep_ex2} in Theorem~\ref{thm:main_theorem2}. These bounds quantify the sub-optimality gap of certainty equivalent control with event-triggered sensing.

\subsection{Control of non-homogeneous multi-particle systems}

\subsubsection{System model}
Consider a system consisting of $n$ particles, where each particle $i$, $i \in \{1, \dots, n\}$, has a state $X^i_t \in \reals$. The global state of the system is $X_t = (X^1_t, X^2_t, \ldots, X^n_t)^\TRANS \in \reals^n$. The observation is given by 
\[
    Y_t = X_t + N_t
\]
where $N_t = (N^1_t, N^2_t, \ldots, N^n_t)^\TRANS $ is the observation noise with $N^i_t \in [-r^i,r^i]$ for some $r^i \ge 0$.

Let $M_t = \sum_{i=1}^n \alpha^i X^i_t$ denote the weighted mean of the global state, where $\alpha^i$, $i \in \{1, \dots, n\}$, are non-negative weights that add to $1$.  We assume that the dynamics of each particle is given by
\begin{equation}\label{eq:mean-field-dynamics}
    X_{t+1}^i = \bar f(M_t,A_t, W_t) + f^i(X_t,A_t,W_t)
\end{equation}
where $A_t \in \ALPHABET A$ is the control action at time~$t$ and $\{W_t\}_{t \ge 1}$, $W_t \in \ALPHABET W$, is an independent and identically distributed process with distribution $P_W$. 

\begin{assumption}\label{assm:mean-field-dynamics}
We assume the following.
\begin{enumerate}
\item The function $\bar f$ is $L^{\bar f}$-Lipschitz in the first argument, i.e., for all $m,m' \in \reals$, $a \in \ALPHABET A$, and $w \in \ALPHABET W$,
\begin{equation}
    |\bar f(m,a,w) - \bar f(m',a,w)| \leq L^{\bar f}|m - m'|
\end{equation}
\item  There exists constants $\gamma^i$ such that 
$\| f^i \|_{\infty} \leq \gamma^i$ for $i \in \{1, \dots, n\}$.
\end{enumerate}
\end{assumption}
The per-step cost is given by
\begin{equation}\label{eq:mean-field-cost}
    c(X_t,A_t) = \bar \ell(M_t, A_t) + \ell(X_t, A_t).
\end{equation}
\begin{assumption}\label{assm:mean-field-cost}
We assume the following. 
\begin{enumerate}
   \item The function $\bar \ell$ is $L^{\bar \ell}$-Lipschitz in the first argument, i.e. for all $m, m'\in \reals$ and $a \in \ALPHABET A$, 
   \begin{equation}
       |\bar \ell(m,a) - \bar \ell(m', a)| \leq L^{\bar \ell} |m - m'|.
   \end{equation}
   \item There exists a constant $\beta$ such that 
   $\|\ell\|_{\infty} \leq \beta$.
\end{enumerate}
\end{assumption}
The above model is a POMDP $\ALPHABET P$ with state $S_t = X_t$, observation $Y_t$, action $A_t$, and per-step cost $c(X_t,A_t)$.

\subsubsection{Certainty equivalent policy}
We consider an abstract MDP that focuses on the weighted mean of the global state. The abstract state space is $\tilde{\ALPHABET S} = \reals$ and the state abstraction function $\phi: \reals^n \to \reals$ is given by
\[
    \phi((x^1,x^2,\ldots,x^n)^\TRANS) = \sum_{i=1}^n \alpha^ix^i
\]

with
\[
    \phi^{-1}(m) = \Bigl\{(x^1,x^2,\ldots,x^n)^\TRANS: \sum_{i=1}^n \alpha^ix^i = m \Bigr\}.
\]

We assume that the metric on the abstract state space is $d_{\ABSTRACT{\ALPHABET S}}(m, m') = |m - m'|$ and take $\lambda^P(\cdot|m)$ and $\lambda^c(\cdot|m)$ to be delta distributions at $m \textbf{1}_n$, where $\mathbf{1}_n$ is the n-dimensional vector of ones.

Therefore, the abstract per-step cost is given by
\begin{align*}
    \ABSTRACT c(\ABSTRACT{s}, a) &= c(\ABSTRACT{s} \mathbf{1}_n,a) 
    = \bar \ell(\ABSTRACT{s},a) + \ell (\ABSTRACT{s} \mathbf{1}_n),
\end{align*}
and the abstract state dynamics are given by
\begin{align*}
    \ABSTRACT{S}_{t+1} = \bar f(\ABSTRACT S_t,A_t,W_t) + \sum_{i=1}^n \alpha^i f^i(\ABSTRACT S_t \mathbf{1}_n,A_t,W_t).
\end{align*}
The above model corresponds to an abstract MDP $\tilde{\ALPHABET M}$. We assume that $\tilde{\ALPHABET M}$ satisfies Assumption~\ref{ass:meas-selection2}.

We consider the weighted mean of the last observation as an estimate of the abstract state, i.e., 
\[
   \EST_t(h_t) = \sum_{i=1}^n \alpha^i y^i_t.
\]
Then, the certainty equivalent policy is
\[
    \mu_t^{\EST}(h_t) = \pi^{\ALPHABET M}_t(\EST(h_t)).
\]

\subsubsection{Sub-optimality bound}
We have assumed that the abstract model $\tilde {\ALPHABET M}$ satisfies Assumption~\ref{ass:meas-selection2}. We now show that Assumption~\ref{ass:lipschitz2} is satisfied.
\begin{lemma}\label{lem:mean_field}
Assumptions~\ref{assm:mean-field-dynamics} and~\ref{assm:mean-field-cost} imply Assumption~\ref{ass:lipschitz2} is satisfied with $F^c_t$ and $F^P_t$ defined as 
\[
    F_t^c(r) = L^{\bar \ell} r + 2\beta
    \quad\text{and}\quad
    F_t^P (r) = L^{\bar f} r + 2 \sum_{i=1}^n \alpha^i \gamma^i,
    \quad r \in \reals.
\]
\end{lemma}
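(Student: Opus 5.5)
We verify the two inequalities of Assumption~\ref{ass:lipschitz2} directly, with the abstraction $\phi(x)=\sum_i \alpha^i x^i$ and $\dhS(m,m')=|m-m'|$. First note that the proposed $F^c_t$ and $F^P_t$ are affine with non-negative slope and non-negative intercept. Hence they are concave, non-decreasing, and map $\reals_{\ge 0}$ into $\reals_{\ge 0}$, as Assumption~\ref{ass:lipschitz2} requires.

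\textbf{Cost bound.} Fix $x,x'\in\reals^n$ and $a\in\ALPHABET A$, and write $m=\phi(x)$, $m'=\phi(x')$. By~\eqref{eq:mean-field-cost} and the triangle inequality,
\[
|c(x,a)-c(x',a)| \le |\bar\ell(m,a)-\bar\ell(m',a)| + |\ell(x,a)|+|\ell(x',a)|.
\]
The first term is at most $L^{\bar\ell}|m-m'|$ by part~1 of Assumption~\ref{assm:mean-field-cost}. Each of the last two terms is at most $\beta$ by part~2. Together these give $F^c_t(\dhSphi(x,x'))$.

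\textbf{Dynamics bound.} We bound the Wasserstein distance by exhibiting a coupling, which is the only step needing care. Drive both next states with the \emph{same} noise $W_t=w\sim P_W$. Then $X_{t+1}=(X^1_{t+1},\dots,X^n_{t+1})$ from $x$ and $X'_{t+1}$ from $x'$ are jointly defined, and their images under $\phi$ have marginals $P^{\phi}_{S,t}(\cdot|x,a)$ and $P^{\phi}_{S,t}(\cdot|x',a)$. This is an admissible coupling, so
\[
\dWas\bigl(P^{\phi}_{S,t}(\cdot|x,a),P^{\phi}_{S,t}(\cdot|x',a)\bigr)\le \EXP\bigl|\phi(X_{t+1})-\phi(X'_{t+1})\bigr|.
\]
Using~\eqref{eq:mean-field-dynamics} and $\sum_i\alpha^i=1$,
\[
\phi(X_{t+1})=\bar f(m,a,w)+\sum_i\alpha^i f^i(x,a,w),
\]
and the analogous expression holds for $x'$. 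The difference is then bounded pointwise in $w$ by
\[
|\bar f(m,a,w)-\bar f(m',a,w)| + \sum_i \alpha^i\bigl(|f^i(x,a,w)|+|f^i(x',a,w)|\bigr).
\]
This is at most $L^{\bar f}|m-m'|+2\sum_i\alpha^i\gamma^i$ by Assumption~\ref{assm:mean-field-dynamics}. Taking expectations preserves the bound, which yields $F^P_t(\dhSphi(x,x'))$. Both inequalities of Assumption~\ref{ass:lipschitz2} therefore hold, completing the argument.
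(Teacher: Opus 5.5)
Your proof is correct and follows the same route as the paper. For the cost you use the triangle inequality with $\|\ell\|_\infty\le\beta$; for the dynamics you use the synchronous coupling $W=W'$, the Lipschitz property of $\bar f$, and the uniform bound $\|f^i\|_\infty\le\gamma^i$. If anything, yours is slightly more careful: you handle arbitrary $x,x'$, whereas the paper writes $f^i(m\mathbf{1}_n,a,w)$, and you also check that $F^c_t$ and $F^P_t$ are concave and non-decreasing.
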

See Appendix~\ref{app:mean_field_proof} for proof.

Furthermore, we have $\eta_t \le \sum_{i=1}^n \alpha^i r^i$ and, therefore, Assumption~\ref{ass:finite_eta2} is satisfied. Therefore, we can bound $\varepsilon_t$ and $\delta_t$ in Theorem~\ref{thm:main_theorem2} as
\[
   \varepsilon_t \leq L^{\bar \ell} \bar r + 2 \beta 
   \quad\text{and}\quad
   \delta_t \leq L^{\bar f} \bar r + 2 \bar \gamma
\]
where $\bar r = \sum_{i=1}^n \alpha^i r^i$ and $\bar \gamma = \sum_{i=1}^n \alpha^i \gamma^i$.

Therefore, we can upper bound the sub-optimality of using the certainty equivalent policy by the bound~\eqref{eq:main_theorem_rep_ex2} in Theorem~\ref{thm:main_theorem2}, which simplifies as follows:
\begin{align}\label{eq:lemma1_bound}
   &W^{\ALPHABET P,\mu^\EST}_t(h_t) - W^{\ALPHABET P}_t(h_t) \nonumber \\
   &\leq 2 \Bigl[ (T-t+1) (L^{\bar c}\bar r + 2\beta) + (L^{\bar f}\bar r + 2 \bar \gamma)\sum_{\tau = t}^{T-1} \Lip(V_{\tau+1}^{\ABSTRACT{\ALPHABET M}})\Bigr].
\end{align}
\begin{remark}
   Such models can arise in situations where there is a local controller associated with each particle, and the local controller ensures that the $\|f^i\|_{\infty} \leq \gamma ^i$. Further, note that the bound in~\eqref{eq:lemma1_bound} depends on $\bar r$, which may be small even if some of the $\{r^i\}_{i=1}^n$ are large.
\end{remark}

\section{Analysis}\label{sec:analysis}

In this section we provide the analysis for our main result. As stated earlier, Theorem~\ref{thm:main_theorem} is a special case of Theorem~\ref{thm:main_theorem2} obtained by taking $\ABSTRACT{\ALPHABET S} = \ALPHABET S$ and $\phi(s)=s$. Therefore, we only provide a proof of Theorem~\ref{thm:main_theorem2}.

We start with some background on policy independent beliefs,  and the AIS theory, followed by the key lemmas and proofs.

\subsection{Policy independent beliefs}\label{sec:pib}

Consider an arbitrary history dependent policy~$\mu$ for the model $\ALPHABET P$ defined in Sec.~\ref{sec:system_model}.
We define the following two \emph{beliefs} which are commonly used in POMDPs:
\begin{itemize}
   \item $b_{t|t}(\cdot | h_t)$ denotes the controller's posterior distribution on the current state $S_t$ given the history $h_t$ under the policy $\mu$, i.e., for any Borel subset $M_S$ of $\ALPHABET S$, $b_{t|t}(M_S |h_t) = \PR^{\mu}(S_t \in M_S | h_t)$. 
   The belief $b_{t|t}(\cdot | h_t)$ is referred to as the \emph{belief state}. It is well known that it does not depend on the choice of the history dependent policy~$\mu$~\cite{Astrom1965,Smallwood1973}.
   \item $b_{t+1|t}(\cdot, \cdot | h_t, a_t)$ denotes the controller's posterior distribution on the next state $S_{t+1}$ and next observation $Y_{t+1}$ given the history $h_t$ and action $a_t$ under policy $\mu$. Note that for any Borel subsets $M_S$ and $M_Y$ of $\ALPHABET S$ and $\ALPHABET Y$, we have
   \[
     \hskip -1.5em
      b_{t+1|t}(M_S, M_Y | h_t, a_t) = \int_{\ALPHABET S} P_t(M_S, M_Y | s_t, a_t) b_{t|t}(ds_t | h_t).
   \]
   Since the belief state $b_{t|t}(\cdot | h_t)$ does not depend on the choice of the policy~$\mu$, it follows from the above relationship that the same holds for $b_{t+1|t}(\cdot, \cdot | h_t, a_t)$ as well. With a slight abuse of notation, we will continue to use $b_{t+1|t}$ to denote its marginals on $\ALPHABET S$ or $\ALPHABET Y$.
\end{itemize}

\subsection{Approximate information states}\label{sec:AIS}

The AIS theory~\cite{AIS} provides a framework to derive sub-optimality bounds for a class of approximate solutions to POMDPs. The key idea in this framework is the notion of an approximate information state, which we formally define below. Our definition is similar to that of~\cite{AIS} with two differences. First, the analysis in~\cite{AIS} was done under the assumption that the state and observation spaces are finite, while we are working with Borel spaces. So, we include a \emph{measurable selection assumption} to ensure that the approximate dynamic program obtained from the AIS has a well-defined solution.  Second, the analysis in~\cite{AIS} used general integral probability metrics (IPMs)~\cite{muller1997}. We restrict our discussion to a specific choice of IPM (Wasserstein-1 distance), since that is the form that is used in our results. 

The discussion below is for the general POMDP model $\ALPHABET P$ defined in Sec.~\ref{sec:system_model}.
\begin{definition}
   Given sequences $\varepsilon = (\varepsilon_1, \dots, \varepsilon_T)$ and $\delta = (\delta_1, \dots, \delta_{T-1}) \in \reals^T_{\ge 0}$, 
   a process $\{Z_t\}_{t \ge 1}$, $Z_t \in \ALPHABET Z$, is called an $(\varepsilon, \delta)$-\emph{approximate information state (AIS)} if there exist
   \begin{itemize}
       \item a sequence of history compression functions $\{\sigma^{\AIS}_t\}_{t=1}^T$, where $\sigma^{\AIS}_t \colon \ALPHABET H_t \to \ALPHABET Z$ with $Z_t = \sigma^{\AIS}_t(H_t)$
       \item a sequence of cost approximators $\{c^{\AIS}_t\}_{t=1}^T$, where $c^{\AIS}_t \colon \ALPHABET Z \times \ALPHABET A \to \reals$
       \item a sequence of dynamics approximators $\{P^{\AIS}_t\}_{t =1}^{T-1}$, where $P^{\AIS}_t \colon \ALPHABET Z \times \ALPHABET A \to \Delta(\ALPHABET Z)$
   \end{itemize}
   such that following three properties are satisfied:
   \begin{enumerate}[leftmargin=3em]
       \item[(AP1)] \emph{Approximately sufficient for performance evaluation:}
       for any time $t \in \{1, \dots, T\}$ and any $h_t \in \ALPHABET H_t$ and $a_t \in \ALPHABET A$, we have
       \[
       \bigl| \EXP[ c_t(S_t, a_t) | h_t ] - c^{\AIS}_t(\sigma^{\AIS}_t(h_t), a_t) \bigr| \le \varepsilon_t
       \]
       \item[(AP2)] \emph{Approximately sufficient for predicting itself:} 
       for any time $t \in \{1, \dots, T-1\}$ and any $h_t \in \ALPHABET H_t$ and $a_t \in \ALPHABET A$, define the stochastic kernel $\nu_t$ on $\ALPHABET H_t \times \ALPHABET A_t \to \Delta(\ALPHABET Z)$ as follows: for any Borel measurable subset $M_Z$ of $\ALPHABET Z$, 
       \begin{align*}
           &\nu_t(M_Z | h_t, a_t) = \PR(Z_{t+1} \in M_Z | h_t, a_t) \\
           & = \int_{\ALPHABET Y} \IND\{ \sigma^{\AIS}_{t+1}(h_t,a_t,y_{t+1}) \in M_Z \} b_{t+1|t}(dy_{t+1} | h_t, a_t).
       \end{align*}
       Then, for any time $t \in \{1, \dots, T-1\}$, we have
       \[
       \dWas\bigl( \nu_t(\cdot | h_t, a_t), P^{\AIS}_t( \cdot | \sigma^{\AIS}_t(h_t), a_t) \bigr) \le \delta_t.
       \]
       \item[(M)]\emph{Measurable selection:} The MDP $\ALPHABET M^{\AIS} :=\langle \ALPHABET Z$, $\ALPHABET A$, $\{P^{\AIS}_t\}_{t=1}^{T-1}$, $\{c^\AIS_t\}_{t=1}^T, T \rangle$ satisfies measurable selection.
   \end{enumerate}
   The tuple $(\sigma^{\AIS}, c^{\AIS}, P^{\AIS})$, where each component is a sequence, is called an \emph{AIS-generator}. 
\end{definition}

We can  write a dynamic program for $\ALPHABET M^{\AIS}$  where the value functions $\{V^{\AIS}_t\}_{t=1}^{T+1}$, $V^{\AIS}_t \colon \ALPHABET Z \to \reals$, are defined as follows.
We initialize $V^{\AIS}_{T+1}(z) = 0$ for all $z \in \ALPHABET Z$ and then recursively define
for $t \in \{T, T-1,\dots, 1\}$
\begin{equation}\label{eq:ADP}
   V^{\AIS}_t(z_t) = \min_{a \in \ALPHABET A} \biggl\{
   c^{\AIS}_t(z_t, a) + \int_{\ALPHABET Z}V^{\AIS}_{t+1}(z') P^{\AIS}_t(dz'|z_t,a) 
   \biggr\}.
\end{equation}
The measurable selection condition (M) implies that there exists a measurable selector $\pi^{\AIS}_t \colon \ALPHABET Z \to \ALPHABET A$, $t \in \{1, \dots, T\}$, such that $\pi^{\AIS}_t(z_t)$ is an arg min of the right hand side of~\eqref{eq:ADP} and the functions $V^{\AIS}_t$ are measurable. From standard results in MDP theory~\cite{hernandez2012discrete}, we know that the policy $\pi^{\AIS} = (\pi^{\AIS}_1, \dots, \pi^{\AIS}_T)$ is an optimal policy for $\ALPHABET M^{\AIS}$.

The main result of the AIS theory is the following:
\begin{theorem}\label{thm:AIS}
   Define a history-dependent policy $\mu^{\AIS} = (\mu^{\AIS}_1, \dots, \mu^{\AIS}_T)$ for the POMDP $\ALPHABET P$ as follows: for any $t \in \{1, \dots, T\}$ and any $h_t \in \ALPHABET H_t$, define
   \[
   \mu^{\AIS}_t(h_t) = \pi^{\AIS}(\sigma^{\AIS}_t(h_t)).
   \]

   Then, for any $t \in \{1, \dots, T\}$ and $h_t \in \ALPHABET H_t$, we have
   \begin{equation}
       W^{\ALPHABET P, \mu^{\AIS}}_t(h_t) - W^{\ALPHABET{P}}_t(h_t) \le 2 \alpha_t
   \end{equation}
   where
   \[
       \alpha_t = \varepsilon_t + \sum_{\tau = t}^{T-1}\bigl[ \delta_{\tau} \Lip(V^{\AIS}_{\tau+1}) + \varepsilon_{\tau+1} \bigr].
   \]
\end{theorem}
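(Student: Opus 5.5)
We prove Theorem~\ref{thm:AIS} by a backward induction that compares three families of functions: the optimal POMDP value $W^{\ALPHABET P}_t$, the value $W^{\ALPHABET P,\mu^{\AIS}}_t$ of the AIS-induced policy, and the lifted AIS value $V^{\AIS}_t(\sigma^{\AIS}_t(h_t))$. The gap splits as
\[
W^{\ALPHABET P,\mu^{\AIS}}_t(h_t) - W^{\ALPHABET P}_t(h_t) = \bigl[W^{\ALPHABET P,\mu^{\AIS}}_t(h_t) - V^{\AIS}_t(\sigma^{\AIS}_t(h_t))\bigr] + \bigl[V^{\AIS}_t(\sigma^{\AIS}_t(h_t)) - W^{\ALPHABET P}_t(h_t)\bigr].
\]
We show each bracket is bounded in absolute value by $\alpha_t$, which gives $2\alpha_t$.

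\textbf{Step 1: optimal value.} We use the history-based dynamic program for the POMDP. Its terminal condition is $W^{\ALPHABET P}_{T+1}=0$, and for each $t$
\[
W^{\ALPHABET P}_t(h_t) = \inf_{a}\Bigl\{\EXP[c_t(S_t,a)\mid h_t] + \EXP\bigl[W^{\ALPHABET P}_{t+1}(h_t,a,Y_{t+1})\mid h_t,a\bigr]\Bigr\},
\]
where the expectations are taken under the policy-independent beliefs $b_{t|t}$ and $b_{t+1|t}$ of Sec.~\ref{sec:pib}. Measurability in Borel spaces may need the universally measurable framework; I would cite standard results here and not reprove them.

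The induction hypothesis is $|W^{\ALPHABET P}_{t+1}(h_{t+1}) - V^{\AIS}_{t+1}(\sigma^{\AIS}_{t+1}(h_{t+1}))|\le \alpha_{t+1}$. Fix an action $a$ and compare the two bracketed Q-terms.
\begin{itemize}
\item The cost terms differ by at most $\varepsilon_t$, by (AP1).
\item For the continuation terms, write $\EXP[W^{\ALPHABET P}_{t+1}\mid h_t,a]$. By the induction hypothesis it is within $\alpha_{t+1}$ of $\EXP[V^{\AIS}_{t+1}(Z_{t+1})\mid h_t,a] = \int V^{\AIS}_{t+1}\,d\nu_t$.
\item That integral is within $\Lip(V^{\AIS}_{t+1})\,\delta_t$ of $\int V^{\AIS}_{t+1}\,dP^{\AIS}_t(\cdot\mid \sigma^{\AIS}_t(h_t),a)$. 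This uses Kantorovich--Rubinstein duality together with (AP2).
\end{itemize}
Since $|\inf f - \inf g|\le \sup|f-g|$, we obtain
\[
|W^{\ALPHABET P}_t - V^{\AIS}_t\circ\sigma^{\AIS}_t| \le \varepsilon_t + \delta_t\Lip(V^{\AIS}_{t+1}) + \alpha_{t+1}.
\]
Unrolling this recursion gives exactly $\alpha_t$. A small bookkeeping point: set $\alpha_{T+1}=0$ and check that the recursion reproduces the stated formula, including the $\varepsilon_{\tau+1}$ terms.

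\textbf{Step 2: policy value.} The same induction works with the policy evaluation equation $W^{\ALPHABET P,\mu^{\AIS}}_t(h_t) = Q$-term at $a=\mu^{\AIS}_t(h_t)$. On the other side, $V^{\AIS}_t(\sigma^{\AIS}_t(h_t))$ equals the AIS Q-term at $a=\pi^{\AIS}_t(\sigma^{\AIS}_t(h_t))$, which is the same action. So there is no infimum to handle: the two Q-terms are compared at a common action, and the identical chain of inequalities yields $|W^{\ALPHABET P,\mu^{\AIS}}_t - V^{\AIS}_t\circ\sigma^{\AIS}_t|\le\alpha_t$.

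\textbf{Main obstacle.} The main difficulty is the continuation-term step. We need $V^{\AIS}_{t+1}$ to be Lipschitz, so that duality applies; if it is not, $\Lip=\infty$ and the bound is vacuous but still true. We also need $\nu_t$ to correctly represent the law of $Z_{t+1}$ given $(h_t,a)$. Both facts follow from the definitions, together with the policy independence of $b_{t+1|t}$, which holds because the actions are fixed in the conditioning.
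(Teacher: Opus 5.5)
Your proposal is correct and follows essentially the same route as the paper, which defers to the proof of \cite[Theorem 9]{AIS}. That proof is a backward induction giving $|W^{\ALPHABET P}_t - V^{\AIS}_t\circ\sigma^{\AIS}_t| \le \alpha_t$ and $|W^{\ALPHABET P,\mu^{\AIS}}_t - V^{\AIS}_t\circ\sigma^{\AIS}_t| \le \alpha_t$ (the latter by comparing the two Q-terms at the common action $\pi^{\AIS}_t(\sigma^{\AIS}_t(h_t))$), followed by the triangle inequality, with $\alpha_t = \varepsilon_t + \delta_t\Lip(V^{\AIS}_{t+1}) + \alpha_{t+1}$, $\alpha_{T+1}=0$, unrolling to the stated formula. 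If you want to sidestep the measurability of the history-based dynamic program for $W^{\ALPHABET P}_t$, note that only the one-sided bound $W^{\ALPHABET P}_t \ge V^{\AIS}_t\circ\sigma^{\AIS}_t - \alpha_t$ is needed: run your Step 2 induction for an arbitrary policy $\mu$, using that $V^{\AIS}_t(z)$ is at most the AIS Q-term at any action, and then take the infimum over $\mu$.
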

\begin{proof}
   The result is the same as \cite[Theorem 9]{AIS}, which was stated under the assumption that $\ALPHABET S$ and $\ALPHABET A$ are finite sets while we are working with Borel spaces. As argued earlier, the measurable selection condition ensures that $V^{\AIS}_t$ and $\pi^{\AIS}_t$ are well-defined and measurable. Under this assumption, the approximation bound follows from exactly the same analysis as in \cite[Theorem 9]{AIS}.
\end{proof}

\subsection{Key lemmas}
The main idea of our sub-optimality bounds is to show that the abstract state estimation functions $\EST_t$, along with the per-step cost $\ABSTRACT c_t$ (defined in \eqref{eq:abstract-c}) and dynamics $\ABSTRACT P_t$ (defined in \eqref{eq:abstract-P}) of the abstract MDP $\ABSTRACT {\ALPHABET M}$ form an AIS generator for an appropriate choice of $\varepsilon$ and $\delta$.
$(\EST, \ABSTRACT c, \ABSTRACT P)$.

We first show that $\EST, \ABSTRACT c$ satisfy condition (AP1) of AIS.

\begin{lemma}\label{lem:P1}
   Under Assumptions~\ref{ass:lipschitz2} and~\ref{ass:finite_eta2},  for any  $h_t \in \ALPHABET H_t$ and $a_t \in \ALPHABET A$, we have
   \[
   \bigl| \EXP[ c_t(S_t, a_t) | h_t ] -  \ABSTRACT c_t(\EST_t(h_t), a_t) \bigr| \le F^c_t(\eta_t).
   \]
\end{lemma}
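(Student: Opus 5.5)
Write $\ABSTRACT s = \EST_t(h_t)$ for brevity. By \eqref{eq:abstract-c}, $\ABSTRACT c_t(\ABSTRACT s, a_t) = \int_{\phi^{-1}(\ABSTRACT s)} c_t(s', a_t)\,\lambda^c(ds' | \ABSTRACT s)$, and $\lambda^c(\cdot|\ABSTRACT s)$ is a probability measure supported on $\phi^{-1}(\ABSTRACT s)$. The plan is to represent both terms as a single double integral and compare pointwise. Concretely, write
\[
\EXP[c_t(S_t,a_t)|h_t] - \ABSTRACT c_t(\ABSTRACT s,a_t) = \int_{\ALPHABET S}\int_{\phi^{-1}(\ABSTRACT s)} \bigl[c_t(s,a_t) - c_t(s',a_t)\bigr]\lambda^c(ds'|\ABSTRACT s)\, b_{t|t}(ds|h_t).
\]
This uses that each of the two measures integrates to one. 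Both integrals are finite because $|c_t|\le c_{\max}$.

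Next, take absolute values inside the integrals and apply \eqref{eq:Lip-c-2} from Assumption~\ref{ass:lipschitz2}. This gives $|c_t(s,a_t)-c_t(s',a_t)| \le F^c_t(\dhS(\phi(s),\phi(s')))$. The key observation is that for $s'$ in the support of $\lambda^c(\cdot|\ABSTRACT s)$ we have $\phi(s') = \ABSTRACT s$. Hence the integrand is bounded by $F^c_t(\dhS(\phi(s),\EST_t(h_t)))$, which no longer depends on $s'$. The inner integral therefore collapses, and the bound becomes $\EXP\bigl[F^c_t(\dhS(\phi(S_t),\EST_t(h_t)))\bigm| h_t\bigr]$.

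Finally, since $F^c_t$ is concave, apply Jensen's inequality for the conditional expectation given $h_t$ to obtain the bound $F^c_t\bigl(\EXP[\dhS(\phi(S_t),\EST_t(h_t))|h_t]\bigr)$. Since $F^c_t$ is non-decreasing and the conditional expectation is at most $\eta_t$ by \eqref{eq:closeness_assm2}, this is at most $F^c_t(\eta_t)$. Assumption~\ref{ass:finite_eta2} guarantees that $\eta_t$ is finite, so the bound is meaningful.

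The only step needing care is Jensen for a concave function on $\reals_{\ge0}$ with a possibly non-integrable argument. Here the argument has finite conditional mean, bounded by $\eta_t$, and $F^c_t$ is concave and nonnegative, hence bounded above by an affine function. Jensen therefore applies without integrability issues.
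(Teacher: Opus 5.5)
Your proposal is correct and follows essentially the same route as the paper's proof: the paper likewise moves the absolute value inside the expectation over $S_t$ and inside the $\lambda^c$-integral, applies \eqref{eq:Lip-c-2} using $\phi(s')=\EST_t(h_t)$ on the support of $\lambda^c(\cdot|\EST_t(h_t))$, and concludes with Jensen's inequality for the concave $F^c_t$ and the definition of $\eta_t$. Your remarks on integrability for Jensen and on the monotonicity of $F^c_t$ in the last step make explicit what the paper leaves implicit.
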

\begin{proof}
\begin{align}
\hskip 2em & \hskip -2em 
\bigl| \EXP[ c_t(S_t, a_t) | h_t ] -  \ABSTRACT c_t(\EST_t(h_t), a_t) \bigr|  \notag \\
&\le \EXP\bigl[ \bigl| c_t(S_t, a_t) - \ABSTRACT c_t(\EST_t(h_t), a_t)\bigr| \mid h_t \bigr]
\label{eq:c-1}
\end{align}
We now consider the inner term for a fixed realization $s_t$,
\begin{align}
&\lvert c_t(s_t, a_t) - \ABSTRACT c_t(\EST_t(h_t), a_t) \rvert \notag \\
&\stackrel{(a)}= \biggl| \int_{\phi^{-1}(\EST_t(h_t))}\bigl[c_t(s_t,a_t) -  c_t(s',a_t)\bigr]\lambda^c_t \bigl(ds'|\EST_t(h_t)\bigr) \biggr|
\nonumber \\
&\stackrel{(b)}\le \int_{\phi^{-1}(\EST_t(h_t))}\bigl|c_t(s_t,a_t) -  c_t(s',a_t)\bigr|\lambda^c_t \bigl(ds'|\EST_t(h_t)\bigr) 
\nonumber \\
&\stackrel{(c)}\le \int_{\phi^{-1}(\EST_t(h_t))}F^c_t(\dhSphi(s_t, s')) \lambda^c_t \bigl(ds'|\EST_t(h_t)\bigr)  
\notag \\
&\stackrel{(d)}=  F^c_t(\dhS(\phi(s_t), \EST_t(h_t))).
\label{eq:c-2}
\end{align}
where $(a)$ follows from definition of $\ABSTRACT c_t$, $(b)$ follows from Jensen's inequality, $(c)$ follows from Assumption~\ref{ass:lipschitz2} and $(d)$ follows from the fact that for any $s' \in \phi^{-1}(\EST_t(h_t))$, $\phi(s') = \EST_t(h_t)$ and that $\lambda^c_t(\phi^{-1}(\EST_t(h_t)) | \EST_t(h_t)) = 1$.  Substituting~\eqref{eq:c-2} in~\eqref{eq:c-1}, we get
\begin{align}
\hskip 2em & \hskip -2em 
\bigl| \EXP[ c_t(S_t, a_t) | h_t ] -  \ABSTRACT c_t(\EST_t(h_t), a_t) \bigr|  \notag \\
&\le \EXP[ F^c_t(\dhS(\phi(S_t), \EST_t(h_t))) | h_t ] \notag \\
&\stackrel{(e)}\le F^c_t\bigl(\EXP[ \dhS(\phi(S_t), \EST_t(h_t)) \mid h_t]\bigr) 
\notag \\
&\stackrel{(f)}\le F^c_t(\eta_t) 
\end{align}
where $(e)$ follows from Jensen's inequality and the concavity of $F^c_t$ and $(f)$ follows from the definition of $\eta_t$. 
\end{proof}

In order to show  that $\EST, \ABSTRACT P$ satisfy condition (AP2) of AIS, we will use the following intermediate lemma.
\begin{lemma}\label{lem:dWas}
Under Assumption~\ref{ass:lipschitz2}, for any $s_t \in \ALPHABET S$, $\ESTIMATE s_t \in \ABSTRACT{\ALPHABET S}$ and $a_t \in \ALPHABET A$, we have that
\begin{equation*}
       \dWas\bigl(P^{\phi}_{S,t}(\cdot|s_t,a_t), \ABSTRACT P_t (\cdot|\ESTIMATE s_t,a_t)\bigr) 
       \le F_t^P \Bigl(\dhS \bigl(\phi(s_t),\ESTIMATE s_t\bigr)\Bigr).
\end{equation*}
\end{lemma}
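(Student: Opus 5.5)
The plan is to write the abstract kernel $\ABSTRACT P_t(\cdot|\ESTIMATE s_t,a_t)$ as a mixture of the kernels $P^{\phi}_{S,t}(\cdot|s',a_t)$ and then use convexity of the Wasserstein-1 distance. By the definition~\eqref{eq:abstract-P} and the definition of $P^{\phi}_{S,t}$ in Assumption~\ref{ass:lipschitz2}, for every Borel $M_{\ABSTRACT{\ALPHABET S}}$ we have
\[
\ABSTRACT P_t(M_{\ABSTRACT{\ALPHABET S}}|\ESTIMATE s_t,a_t) = \int_{\phi^{-1}(\ESTIMATE s_t)} P^{\phi}_{S,t}(M_{\ABSTRACT{\ALPHABET S}}|s',a_t)\,\lambda^P(ds'|\ESTIMATE s_t).
\]
So $\ABSTRACT P_t(\cdot|\ESTIMATE s_t,a_t)$ is the $\lambda^P(\cdot|\ESTIMATE s_t)$-mixture of the measures $P^{\phi}_{S,t}(\cdot|s',a_t)$, $s'\in\phi^{-1}(\ESTIMATE s_t)$. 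The first argument $P^{\phi}_{S,t}(\cdot|s_t,a_t)$ is trivially the same kind of mixture with a Dirac mixing measure.

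Next I would establish joint convexity of $\dWas$ under mixtures:
\[
\dWas\Bigl(\nu,\int \nu'_{s'}\,\lambda(ds')\Bigr)\le \int \dWas(\nu,\nu'_{s'})\,\lambda(ds').
\]
The cleanest route is Kantorovich--Rubinstein duality: $\dWas(\nu_1,\nu_2)=\sup_{\Lip(f)\le 1}\bigl|\int f\,d\nu_1-\int f\,d\nu_2\bigr|$. For any $1$-Lipschitz $f$, Fubini lets the difference be written as $\int\bigl[\int f\,d\nu-\int f\,d\nu'_{s'}\bigr]\lambda(ds')$. Its absolute value is at most $\int \dWas(\nu,\nu'_{s'})\lambda(ds')$, and taking the supremum over $f$ gives the inequality. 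An alternative would be to glue couplings measurably, but duality avoids selection issues.

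Applying this with $\nu=P^{\phi}_{S,t}(\cdot|s_t,a_t)$ and then Assumption~\ref{ass:lipschitz2} gives
\[
\dWas(\cdot,\cdot)\le\int_{\phi^{-1}(\ESTIMATE s_t)}F^P_t\bigl(\dhS(\phi(s_t),\phi(s'))\bigr)\lambda^P(ds'|\ESTIMATE s_t).
\]
On $\phi^{-1}(\ESTIMATE s_t)$ we have $\phi(s')=\ESTIMATE s_t$, so the integrand is constant. Since $\lambda^P(\phi^{-1}(\ESTIMATE s_t)|\ESTIMATE s_t)=1$, the integral equals $F^P_t(\dhS(\phi(s_t),\ESTIMATE s_t))$. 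This mirrors steps (a)--(d) in the proof of Lemma~\ref{lem:P1}.

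The main obstacle is justifying the duality and Fubini step. This needs measurability of $s'\mapsto\int f\,dP^{\phi}_{S,t}(\cdot|s',a_t)$, which holds because $P^{\phi}_{S,t}$ is a stochastic kernel. Duality also requires first moments, or else one interprets the bound trivially when it is infinite. For simplicity I would assume $\ABSTRACT{\ALPHABET S}$ is Polish, where Kantorovich--Rubinstein holds for measures with finite first moment; if a moment is infinite, the inequality is still consistent because the dual supremum upper bounds nothing beyond the right-hand side.
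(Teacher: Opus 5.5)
Your proposal is correct and matches the paper's proof step for step. Both write $\ABSTRACT P_t(\cdot|\ESTIMATE s_t,a_t)$ as a $\lambda^P(\cdot|\ESTIMATE s_t)$-mixture of $P^{\phi}_{S,t}(\cdot|s',a_t)$, apply convexity of $\dWas$, invoke Assumption~\ref{ass:lipschitz2}, and use $\phi(s')=\ESTIMATE s_t$ on the support of $\lambda^P$. The only difference is how convexity is justified: the paper cites the general convexity of optimal transport cost~\cite[Thm.~4.8]{villani2009optimal}, which needs no moment conditions, while your duality route handles the first-moment issue cleanly if you take the supremum over bounded $1$-Lipschitz test functions (this form of duality holds for all Borel probability measures on a Polish space).
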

\begin{proof}
   \begin{align}
       &\dWas\bigl(P^{\phi}_{S,t}(\cdot|s_t,a_t), \ABSTRACT P_t (\cdot|\ESTIMATE s_t,a_t)\bigr) \nonumber \\
       &= \dWas\bigl(P^{\phi}_{S,t}(\cdot|s_t,a_t), \int_{\phi^{-1}(\ESTIMATE s_t)}P^{\phi}_{S,t}(\cdot|s',a_t)\lambda^P(ds'|\ESTIMATE s_t) \bigr) \nonumber \\
       &\stackrel{(a)}{\leq} \int_{\phi^{-1}(\ESTIMATE s_t)}\dWas\Bigl(P^{\phi}_{S,t}(\cdot|s_t,a_t), P^{\phi}_{S,t}(\cdot|s',a_t)\Bigr) \lambda^P(ds'|\ESTIMATE s_t) \nonumber \\
       &\stackrel{(b)}{\leq} \int_{\phi^{-1}(\ESTIMATE s_t)}F_t^P \Bigl(\dhS \bigl(\phi(s_t),\phi(s')\bigr)\Bigr)\lambda^P(ds'|\ESTIMATE s_t)  \nonumber \\
       &= F_t^P \Bigl(\dhS \bigl(\phi(s_t),\ESTIMATE s_t\bigr)\Bigr),
   \end{align}
   where $(a)$ follows from the convexity of Wasserstein distance~\cite[Thm. 4.8]{villani2009optimal} and $(b)$ follows from Assumption~\ref{ass:lipschitz2}.
\end{proof}

Next we define a stochastic kernel $\ESTIMATE \psi_t \colon \ALPHABET H_t \times \ALPHABET A_t \to \Delta(\ABSTRACT{\ALPHABET S})$, which is analogous to $\nu_t$ defined in (AP2).
For any $h_t \in \ALPHABET H_t$ and $a_t \in \ALPHABET A$ and  Borel measurable subset $M_{\ABSTRACT S}$ of $\ABSTRACT{\ALPHABET S}$, 
\begin{align}
   &\ESTIMATE \psi_{t}(M_{\ABSTRACT S} | h_t, a_t) = \PR(\EST_{t+1}(H_{t+1}) \in M_{\ABSTRACT S} | h_t, a_t) \nonumber \\
   & = \int_{\ALPHABET Y} \IND\{ \EST_{t+1}(h_t,a_t,y_{t+1}) \in M_{\ABSTRACT S} \} b_{t+1|t}(dy_{t+1} | h_t, a_t)\label{eq:hatnu}
\end{align}
which is the conditional probability distribution of $\ESTIMATE{S}_{t+1} =\EST_{t+1}(H_{t+1})$ given $h_t,a_t$. 

We also define the stochastic kernel $\ABSTRACT \psi_t \colon \ALPHABET H_t \times \ALPHABET A_t \to \Delta(\ABSTRACT{\ALPHABET S})$, which is used in the proof of the next lemma. 
For any $h_t \in \ALPHABET H_t$ and $a_t \in \ALPHABET A$ and  Borel measurable subset $M_{\ABSTRACT S}$ of $\ABSTRACT{\ALPHABET S}$, 
\begin{align}
    &\ABSTRACT \psi_{t}(M_{\ABSTRACT S} | h_t, a_t) = \PR(\ABSTRACT S_{t+1} \in M_{\ABSTRACT S} | h_t, a_t) \notag \\
   & = \int_{\ALPHABET S} \IND\{ \phi(s_{t+1}) \in M_{\ABSTRACT S} \} b_{t+1|t}(ds_{t+1} | h_t, a_t) \notag \\
   &= \int_{\ALPHABET S} P^{\phi}_{S,t}(M_{\ABSTRACT S} | s_t, a_t) b_{t|t}(ds_t | h_t, a_t),
   \label{eq:tilde-psi}
\end{align}
which is the conditional probability distribution of $\phi({S}_{t+1})$ given $h_t,a_t$.

The following lemma shows that $\EST_t$, $\ABSTRACT{P}_t$ satisfy (AP2).
\begin{lemma}\label{lem:P2}
  Under Assumptions~\ref{ass:lipschitz2} and~\ref{ass:finite_eta2}, for any  $h_t \in \ALPHABET H_t$ and $a_t \in \ALPHABET A$, we have
   \begin{equation*}
       \dWas(\hat\psi_{t}(\cdot | h_t, a_t), 
       \ABSTRACT P_{t}(\cdot | \EST_t(h_t), a_t) ) 
       \le F^P_t (\eta_t)+ \eta_{t+1},
   \end{equation*}
   where $\hat\psi_{t}(\cdot | h_t,a_t)$ is the probability distribution on $\ABSTRACT{\ALPHABET S}$ defined in \eqref{eq:hatnu}.
\end{lemma}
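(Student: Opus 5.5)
The plan is to use the triangle inequality for $\dWas$ on $\Delta(\ABSTRACT{\ALPHABET S})$, with the kernel $\ABSTRACT\psi_t(\cdot | h_t, a_t)$ from \eqref{eq:tilde-psi} as the intermediate distribution:
\begin{align*}
\dWas\bigl(\hat\psi_t(\cdot|h_t,a_t), \ABSTRACT P_t(\cdot|\EST_t(h_t),a_t)\bigr)
&\le \dWas\bigl(\hat\psi_t(\cdot|h_t,a_t), \ABSTRACT\psi_t(\cdot|h_t,a_t)\bigr) \\
&\quad + \dWas\bigl(\ABSTRACT\psi_t(\cdot|h_t,a_t), \ABSTRACT P_t(\cdot|\EST_t(h_t),a_t)\bigr).
\end{align*}
The first term will be bounded by $\eta_{t+1}$ and the second by $F^P_t(\eta_t)$.

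\emph{First term.} The two distributions are the conditional laws, given $(h_t,a_t)$, of $\EST_{t+1}(H_{t+1})$ and $\phi(S_{t+1})$. These random variables are defined on a common probability space, so their conditional joint law given $(h_t,a_t)$ is a valid coupling. The definition of $\dWas$ as an infimum over couplings then gives
\[
\dWas(\hat\psi_t,\ABSTRACT\psi_t) \le \EXP\bigl[\dhS(\phi(S_{t+1}),\EST_{t+1}(H_{t+1})) \bigm| h_t,a_t\bigr].
\]
By the tower property, the right-hand side equals $\EXP\bigl[\EXP[\dhS(\phi(S_{t+1}),\EST_{t+1}(H_{t+1}))\mid H_{t+1}] \bigm| h_t,a_t\bigr]$. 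Each inner expectation is at most $\eta_{t+1}$ by \eqref{eq:closeness_assm2}, so the first term is at most $\eta_{t+1}$. The conditioning on $h_t,a_t$ is legitimate because $H_{t+1}=(h_t,a_t,Y_{t+1})$ and the beliefs are policy independent (Sec.~\ref{sec:pib}).

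\emph{Second term.} By the last line of \eqref{eq:tilde-psi}, $\ABSTRACT\psi_t(\cdot|h_t,a_t)=\int P^\phi_{S,t}(\cdot|s_t,a_t)\,b_{t|t}(ds_t|h_t)$, which is a mixture. By convexity of the Wasserstein distance \cite[Thm. 4.8]{villani2009optimal}, used exactly as in Lemma~\ref{lem:dWas},
\[
\dWas(\ABSTRACT\psi_t, \ABSTRACT P_t(\cdot|\EST_t(h_t),a_t)) \le \int \dWas\bigl(P^\phi_{S,t}(\cdot|s_t,a_t),\ABSTRACT P_t(\cdot|\EST_t(h_t),a_t)\bigr)\,b_{t|t}(ds_t|h_t).
\]
Lemma~\ref{lem:dWas}, applied with $\ESTIMATE s_t=\EST_t(h_t)$, bounds the integrand by $F^P_t(\dhS(\phi(s_t),\EST_t(h_t)))$. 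Since $F^P_t$ is concave and non-decreasing, Jensen's inequality gives the bound $F^P_t\bigl(\EXP[\dhS(\phi(S_t),\EST_t(h_t))\mid h_t]\bigr)$, and monotonicity together with \eqref{eq:closeness_assm2} gives $F^P_t(\eta_t)$. This mirrors steps (e)--(f) in Lemma~\ref{lem:P1}.

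The main obstacle is the first term. One must check that the coupling argument is rigorous: $(\phi(S_{t+1}),\EST_{t+1}(H_{t+1}))$ has a well-defined conditional joint law given $(h_t,a_t)$, obtained from $b_{t+1|t}(ds_{t+1},dy_{t+1}|h_t,a_t)$, and its marginals are exactly $\ABSTRACT\psi_t$ and $\hat\psi_t$. Once this joint distribution is written explicitly via $b_{t+1|t}$, the remaining steps are routine. Finiteness of the bounds (Assumption~\ref{ass:finite_eta2}) ensures that all the quantities involved are finite.
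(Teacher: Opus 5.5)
Your proposal is correct and follows the paper's proof essentially step for step. Both use the triangle inequality through $\ABSTRACT\psi_t$. The first term is bounded by $\eta_{t+1}$ via the natural coupling of $(\EST_{t+1}(H_{t+1}),\phi(S_{t+1}))$ and the tower property. The second term is bounded by $F^P_t(\eta_t)$ via convexity of $\dWas$, Lemma~\ref{lem:dWas}, and Jensen's inequality for the concave, non-decreasing $F^P_t$.
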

\begin{proof}
   Let $\ESTIMATE s_t = \EST_t(h_t)$. By triangle inequality, we have
   \begin{align}
       \hskip 2em & \hskip -2em
       \dWas(\ESTIMATE\psi_{t}(\cdot | h_t, a_t), 
       \ABSTRACT P_{t}(\cdot | \ESTIMATE s_t, a_t) ) \notag \\
      &\le 
       \dWas(\ESTIMATE\psi_{t}(\cdot | h_t, a_t), 
       \ABSTRACT\psi_{t}(\cdot | h_t, a_t))
       \notag \\
       &\quad + 
       \dWas(\ABSTRACT\psi_{t}(\cdot | h_t, a_t),
       \ABSTRACT P_{t}(\cdot | \ESTIMATE s_t, a_t) ). 
       \label{eq:two-terms}
   \end{align}
   Now we consider the two terms separately. The first term of~\eqref{eq:two-terms} can be bounded as follows.
   \begin{align}
       \hskip 2em & \hskip -2em
        \dWas(\ESTIMATE \psi_{t}(\cdot | h_t, a_t), 
       \ABSTRACT \psi_{t}(\cdot | h_t, a_t))
       \notag \\
       &\stackrel{(a)}= \inf_{ (\ESTIMATE{S}, \ABSTRACT S) \sim \Gamma(\ESTIMATE \psi_{t}, \ABSTRACT \psi_{t})}
       \EXP \bigl[
       \dhS(\ESTIMATE{S}, \ABSTRACT S) \bigr]
       \notag \\
       &\stackrel{(b)}\le \EXP[ \dhS\bigl(\EST_{t+1}(H_{t+1}), \phi(S_{t+1})\bigr) | h_t, a_t ] 
       \notag \\
       &= \EXP[ \EXP[ \dhS\bigl(\EST_{t+1}(H_{t+1}), \phi(S_{t+1})\bigr) | H_{t+1} ]  | h_t, a_t ] 
       \notag \\
       &\le \EXP[ \eta_{t+1}  | h_t, a_t ] 
       = \eta_{t+1}
       \label{eq:1st-term}
   \end{align}
   where, in $(a)$,  $\Gamma(\ESTIMATE \psi_{t}, \ABSTRACT \psi_{t})$ denotes the set of all joint measures with marginals $\ESTIMATE\psi_{t}(\cdot | h_t, a_t)$ and $\ABSTRACT \psi_{t}(\cdot | h_t, a_t)$, and in $(b)$, we use the fact that conditioned on $h_t,a_t$, the marginal distributions of $\EST_{t+1}(H_{t+1})$ and $\phi(S_{t+1})$ are   $\ESTIMATE \psi_{t}(\cdot | h_t, a_t)$ and $\ABSTRACT \psi_{t}(\cdot | h_t, a_t)$ respectively; therefore, the joint distribution on $(\EST_{t+1}(H_{t+1}), \phi(S_{t+1}))$ conditioned on $(h_t, a_t)$ lies in $\Gamma(\ESTIMATE \psi_{t}, \ABSTRACT \psi_{t})$.  

   The second term of~\eqref{eq:two-terms} can be bounded as follows.
   \begin{align}
       \hskip 2em & \hskip -2em
       \dWas(\tilde \psi_{t}(\cdot | h_t, a_t),
       \ABSTRACT P_{t}(\cdot | \ESTIMATE{s}_t, a_t) ) 
       \notag \\
       &= \dWas\Bigl( \int_{\ALPHABET S}P^{\phi}_{S,t}(\cdot|s_t,a_t)b_{t|t}(ds_t|h_t) ,
       \ABSTRACT P_{t}(\cdot | \ESTIMATE s_t, a_t) \Bigr)
       \notag \\
       &\stackrel{(a)}\le \int_{\ALPHABET S}  \dWas\bigl( P^{\phi}_{S,t}(\cdot|s_t,a_t) , \ABSTRACT P_{t}(\cdot | \ESTIMATE{s}_t, a_t) \bigr) b_{t|t}(ds_t|h_t)
       \notag \\
       &\stackrel{(b)}\le \int_{\ALPHABET S}  F^P_t \bigl(\dhS(\phi(s_t),\ESTIMATE{s}_t)\bigr)b_{t|t}(ds_t|h_t)\notag \\
       &= \EXP[ F^P_t\bigl(\dhS(\phi(S_t), \EST_t(h_t)) |  h_t ]\bigr) \notag \\
       &\stackrel{(c)}\le  F^P_t \bigl( \EXP\bigl[\dhS(\phi(S_t), \EST_t(h_t)) |  h_t \bigr]\bigr) \notag \\
       &\le F^P_t (\eta_t). \label{eq:2nd-term}
   \end{align}
   where $(a)$ follows from the convexity of Wasserstein distance~\cite[Thm. 4.8]{villani2009optimal},  and $(b)$ follows from Lemma~\ref{lem:dWas}, 
   and $(c)$ follows from Jensen's inequality and the concavity of $F_t^P.$
   \end{proof}
\subsection{Proof of Theorem~\ref{thm:main_theorem2}}\label{sec:thm_proof}
Under Assumptions~\ref{ass:lipschitz2} and~\ref{ass:finite_eta2}, Lemmas~\ref{lem:P1} and~\ref{lem:P2} ensure that conditions (AP1) and (AP2) of AIS are satisfied with $\varepsilon_t = F^c_t (\eta_t), \delta_t = F^P_t (\eta_t) + \eta_{t+1} $. Assumption~\ref{ass:meas-selection2} ensures that condition (M) of AIS is satisfied. Thus, the result follows from Theorem~\ref{thm:AIS}.

\section{Conclusion}\label{sec:conclusion}
In this paper, we introduced a generalization of the certainty equivalence principle for control policies in partially observable Markov decision processes (POMDPs). Our approach applies optimal state-feedback policies from the fully observable MDP to state estimates, without restricting to specific types of estimators such as MMSE. We established theoretical performance bounds that characterize their degree of sub-optimality. Specifically, we leveraged the approximate information state (AIS) framework~\cite{AIS} to quantify the impact of estimation errors on control performance, deriving bounds in terms of the smoothness of the system dynamics and the per-step cost function.

To illustrate the practical relevance of our results, we examined several examples that demonstrate that certainty equivalent policies can perform near-optimally when state estimation errors are small. This suggests that in scenarios where exact optimal policies are computationally intractable, certainty equivalent policies offer a practical and efficient alternative, making effective use of available state estimates to achieve reliable decision-making while maintaining tractability.

\bibliographystyle{IEEEtran}
\bibliography{references}

\appendices

\section{Proof of Lemma~\ref{lem:mean_field}}\label{app:mean_field_proof}

We prove the two parts separately. 

\subsubsection{Proof of~\eqref{eq:Lip-c-2}}
Arbitrarily pick $s, s' \in \ALPHABET S$. Let $m = \phi(s), m' = \phi(s') \in \ABSTRACT{\ALPHABET S} = \reals$. Then, by triangle inequality we have
\begin{align*}
|c(s,a) - c(s',a)| &\leq |\bar \ell(m,a) - \bar \ell(m',a)| + |\ell(s,a) - \ell(s',a)| \\
&\leq L^{\bar \ell} |m - m'| + 2\beta \coloneqq F^c_t(|m - m'|).
\end{align*}

\subsubsection{Proof of~\eqref{eq:Lip-P-2}}
Arbitrarily pick $(m,a) \in \ALPHABET S \times \ALPHABET A$. Let $M(w) = \bar f (m,a,w) + \sum_{i=1}^n\alpha^i f^i(m\textbf{1}_n,a,w)$ and $M'(w') = \bar f (m',a,w') + \sum_{=1}^n\alpha^i f^i(m'\textbf{1}_n,a,w')$. The left hand side of~\eqref{eq:Lip-P-2} is the Wasserstein distance between random variables $M(W)$ and $M'(W')$, where $W$ and $W'$ are identically distributed random variables with marginal distribution $P_W$. Let $\Gamma$ denote all joint couplings between $W$ and $W'$ such that the marginals are $P_W$. Then,
\begin{align*}
\dWas(M(W),M'(W')) &= \inf_{(W,W') \sim \Gamma} \EXP[ |M(W) - M'(W')|] \\
&\leq \EXP[|M(W)-M'(W)|] 
\end{align*}
where in the last equation we have chosen a specific coupling $W = W'$.

Now observe that
\begin{align*}
&\EXP[|M(W)-M'(W)|]  \\
&= \EXP\biggl[\biggl|\bar f(m,a,W) - \bar f(m',a,W) \\
&\quad + \sum_{i=1}^n \alpha^i\bigl(f^i(m\textbf{1}_n,a,W) - f^i(m'\textbf{1}_n,a,W) \bigr)\biggr|\biggr] \\
&\leq \EXP\bigl[|\bar f(m,a,W) - \bar f(m',a,W)|\bigr] \\ 
&\hspace{-1em}\quad + \sum_{i=1}^n \alpha^i\EXP\biggl[\biggl|\bigl(f^i(m\textbf{1}_n,a,W) - f^i(m'\textbf{1}_n,a,W)\biggr|\biggr] \\
&\leq L^{\bar f}|m-m'| + 2\sum_{i=1}^n\alpha^i \gamma^i 
\coloneqq F^P_t(|m-m'|)
\end{align*}
where the last inequality holds from Assumption~\ref{assm:mean-field-dynamics}.

\end{document}